\theoremstyle{definition}
\theoremstyle{plain}
\date{}
\renewcommand{\theequation}{\arabic{section}.\arabic{equation}}
\newtheorem{Thm}{Theorem}[section]
\newtheorem{Lemma}[Thm]{Lemma}
\newcommand{\p}{\partial}
\newcommand{\dis}{\displaystyle}
\newcommand{\N}{{\mathbb N}}
\newcommand{\R}{{\mathbb R}}
\newcommand{\ep}{\varepsilon }
\newcommand{\2}{\frac{1}{2} }
\newcommand{\tra}{{\sf T}}
\newcommand{\interface}{\Sigma}
\newcommand{\ve}{\varepsilon}
\title{\bf Mathematical analysis of \\ modified level-set equations }
\author{Dieter Bothe\footnote{Department of Mathematics, Technische Universit\"at Darmstadt, Germany. E-mail:  bothe@mma.tu-darmstadt.de},
  \, Mathis Fricke\footnote{Department of Mathematics, Technische Universit\"at Darmstadt, Germany. E-mail:   fricke@mma.tu-darmstadt.de}
 \,\,and \,\, Kohei Soga\footnote{Department of Mathematics, Faculty of Science and Technology, Keio University, Japan. E-mail:  soga@math.keio.ac.jp
}}
\begin{document}
\maketitle
\begin{abstract}
\noindent  The linear transport equation allows to advect level-set functions to represent
moving sharp interfaces in multiphase flows as zero level-sets.
A recent development in computational fluid dynamics is to modify the linear transport equation by introducing a nonlinear term to preserve certain  geometrical features of the level-set function, where the zero level-set must stay invariant under the modification. The present work establishes mathematical justification for a specific class of modified level-set equations on a bounded domain,  generated by a given smooth velocity field in the framework of the initial/boundary value problem of Hamilton-Jacobi equations. The first main result is the existence of smooth solutions defined in a time-global tubular neighborhood of the zero level-set, where an infinite iteration of the method of characteristics within a fixed small time interval is demonstrated; the smooth solution is shown to possess the desired geometrical feature. The second main result is the existence of time-global viscosity solutions defined in the whole domain, where standard Perron's method and the comparison principle are exploited. In the first and second main results, the zero level-set is shown to be identical with the original one.
The third main result is that the viscosity solution coincides with the local-in-space smooth solution in a  time-global tubular neighborhood of the zero level-set, where a new aspect of localized doubling the number of variables is utilized.

\medskip

\noindent{\bf Keywords:} linear transport equation; level-set equation; Hamilton-Jacobi equation; method of characteristics; viscosity solution; partial regularity of viscosity solution  \medskip

\noindent{\bf AMS subject classifications:}  35Q49;  35F21; 35A24;  35D40; 35R37
\end{abstract}
\setcounter{section}{0}
\setcounter{equation}{0}
\section{Introduction}
The linear transport equation $\p_t f+v\cdot\nabla f=0$ describes the passive  advection of a scalar quantity $f$ by a velocity field $v$. We start with a brief overview of the fundamental role of the linear transport equation in fluid dynamics.
Suppose that a domain $\Omega\subset\R^3$ is occupied by a fluid.
The Lagrangian specification of a fluid flow is to look at the position of each fluid element, i.e., for each time $t\in[0,\infty)$  the position of the fluid element being at $\xi\in\Omega$ at time $\tau\in[0,\infty)$, which is called the ``fluid element $(\tau,\xi)$'',  is denoted by
$$X(t,\tau,\xi).$$
Then, the velocity of each fluid element $(\tau,\xi)$ is defined for each $t\ge0$ as
$$\frac{\p}{\p t}X(t,\tau,\xi).$$
Assuming that $X(t,\tau,\xi)=x$ is equivalent to $\xi=X(\tau,t,x)$ for all $t,\tau\in[0,\infty)$ and $x,\xi\in\Omega$, one obtains the Eulerian specification of the  fluid flow, i.e., the velocity field $v$ defined as
\begin{eqnarray*}
v:[0,\infty)\times\Omega\to\R^3,\quad v(t,x):=\frac{\p}{\p t}X(t,0,\xi)\Big|_{\xi=X(0,t,x)},
\end{eqnarray*}
which leads to
\begin{eqnarray}\label{1velocity}
\frac{\p}{\p t}X(t,0,\xi)=v(t,X(t,0,\xi)),\quad \forall\, t\ge0,\,\,\,\forall\,\xi\in\Omega.
\end{eqnarray}
Hence, $X$ can be seen as the flow of the kinematic ordinary differential equation (ODE)
\begin{eqnarray}\label{1ODE}
x'(s)=v(s,x(s)),\quad s\ge 0.
\end{eqnarray}
Let $F(t,\xi)$ be a scalar quantity at time $t$ that is associated with the fluid element $(0,\xi)$.
The Eulerian description $f$ of $F$ is defined as
$$f: [0,\infty)\times\Omega\to\R,\quad f(t,x):=F(t,\xi)\Big|_{\xi=X(0,t,x)}.$$
Suppose that $F(t,\xi)\equiv\phi^0(\xi)$,  i.e., each fluid element $(0,\xi)$ preserves the quantity $F(0,\xi)=\phi^0(\xi)$.
Then, noting that $f(t,X(t,0,\xi))\equiv F(t,\xi)$, we have the following identity
\begin{eqnarray*}
\frac{\rm d}{{\rm d} t} f(t,X(t,0,\xi))=0,\quad \forall\, t\ge0,\,\,\,\forall\,\xi\in\Omega.
\end{eqnarray*}
With \eqref{1velocity}, we find that $f$ satisfies the linear transport equation
\begin{eqnarray}\label{transport}
\left\{
\begin{array}{lll}
&\dis  \frac{\p f}{\p t}(t,x)+v(t,x)\cdot\nabla f(t,x)=0 \mbox{\quad in $(0,\infty)\times\Omega$},\medskip \\
&\dis f(0,\cdot)=\phi^0\mbox{\quad on $\Omega$},
\end{array}
\right.
\end{eqnarray}
where $\nabla=(\p_{x_1},\p_{x_2},\p_{x_3})$. It is intuitively clear (and mathematically true as well) that the solution of \eqref{transport} is given as
\begin{eqnarray}\label{1sol}
f(t,x)=\phi^0(X(0,t,x)).
\end{eqnarray}
This observation leads to the method of characteristics for more general first order PDEs.
Note that if $v$ and $\phi^0$ are not $C^1$-smooth, the meaning of solution must be generalized. A typical example of $F$ being preserved by each fluid element is the density in an incompressible fluid,  where the velocity $v$ comes from the incompressible Navier-Stokes equations. We refer to \cite{Lions} and \cite{DM} for recent development of mathematical analysis for the system of the linear transport equation and the incompressible Navier-Stokes equations. We refer also to \cite{DiPerna-Lions} and \cite{AC} for generalization of ODE-based classical theory of the linear transport equations to the case with velocity fields being less regular.

We now discuss the transport equation in the context of the level-set method in two-phase flow problems. Suppose that  $\Omega$ is occupied by two immiscible fluids (distinguished by the superscript $\pm$) in such a way that at $t=0$ the domain $\Omega$ is divided into two disjoint connected open sets and their interface: $\Omega^+(0)\subsetneq\Omega$ with $\p\Omega^+(0)\cap\p\Omega=\emptyset$ is a connected open set  filled by fluid${}^+$, $\Omega^-(0):=\Omega\setminus \overline{\Omega^+(0)}$ is  filled by fluid${}^-$ and $\interface(0):=\p\Omega^+(0)\cap\p\Omega^-(0)=\p\Omega^+(0)$ is the interface. Note that, for now, we discuss the case where $\interface(0)$ does not touch $\p\Omega$, while in Subsection 2.2 we will consider the other case.
We suppose that for each $t>0$ the open set
 \begin{eqnarray}\label{1interior}
\Omega^+(t):=X(t,0,\Omega^+(0))\qquad(\mbox{resp.  $\Omega^-(t):=X(t,0,\Omega^-(0))$})
\end{eqnarray}
is occupied by fluid${^+}$ (resp. fluid$^{-}$) and the common interface of fluid$^{\pm}$ is given as
\begin{eqnarray}\label{1interface}
\interface(t):=X(t,0, \interface(0)),
\end{eqnarray}
where the continuity of the flow $X$ implies that \eqref{1interface} is well-defined even though no unique fluid elements are associated to the points on $\interface(0)$. In other words,  the velocity field $v$ coming from the two-phase Navier-Stokes equations is assumed to be such that  \eqref{1ODE} generates a proper flow from $\bar{\Omega}$ to itself; see Section~2 below for more details.
The interface given as \eqref{1interface} is called a material interface, as opposed to a non-material interface formed by a two-phase flow with phase change (see \cite{DB} for investigations  of \eqref{1ODE} in the case of non-material interfaces).
Let $\phi^0:\bar{\Omega}\to\R$ be a smooth function such that $\phi^0>0$ on $\Omega^+(0)$ and $\phi^0<0$ on $\Omega^-(0)$, which implies that $\phi^0=0$ only on $\interface(0)$.
We assign to each fluid element $(0,\xi)$ the number $\phi^0(\xi)$. Let $F(t,\xi)$ be the label of the fluid element $(0,\xi)$ at time $t$, which must be equal to $\phi^0(\xi)$ for any $t\ge0$.
Then, the Eulerian description $f$ of $F$, i.e., $f(t,x):=F(t,\xi)|_{\xi=X(0,t,x)}=\phi^0(X(0,t,x))$, satisfies the transport equation \eqref{transport} with the representation \eqref{1sol}. In particular, we have
\begin{eqnarray*}
&&\Omega^+(t)=\{x\in\Omega\,|\, f(t,x)>0\},\quad \Omega^-(t)=\{x\in\Omega\,|\, f(t,x)<0\},\\
&&\interface(t)=\{x\in\Omega\,|\, f(t,x)=0 \},\quad \forall\, t\ge0.
\end{eqnarray*}
We call $f$ a {\it level-set function} and the linear transport equation for level-set functions the {\it level-set equation}. Throughout the paper, the level-set means the zero level of a level-set function.
Suppose that $\interface(0)$ is equal to the level-set of a $C^2$-function $\phi^0$ such that $\nabla \phi^0\neq0$ on $\interface(0)$, where $\interface(0)$ is a $C^2$-smooth closed surface (compact manifold without boundary).
If $X(t,\tau,\cdot):\Omega^+(\tau)\cup\interface(\tau)\cup\Omega^-(\tau)\to\Omega^+(t)\cup\interface(t)\cup\Omega^-(t)$ is a $C^2$-diffeomorphism for each $t,\tau\ge0$, we see that
\begin{eqnarray}\label{1non}
\nabla f(t,x)\neq0 \mbox{\quad on $\interface(t)$, $\forall\,t\ge0$},
\end{eqnarray}
and $\interface(t)$ keeps being a $C^2$-smooth closed surface for all $t>0$. In particular,  the unit normal vector $ \nu(t,x)$ and the total (twice the mean) curvature $\kappa(t,x)$ of $\interface(t)$ at each point $x$ are well-defined and represented as
\begin{eqnarray*}
 \nu(t,x)=\frac{\nabla f(t,x)}{|\nabla f(t,x)|},\quad
 \kappa(t,x)= -\nabla\cdot \nu(t,x),
\end{eqnarray*}
where $|\cdot|$ denotes the Euclidean norm. In a two-phase flow problem, the Navier-Stokes equations for the velocity field are coupled with the level-set equation on the interface through $\nu$ and $\kappa$. We refer to \cite{Abels} and \cite{pruss} for recent developments of mathematical analysis of multiphase flow problems and to \cite{Giga} for mathematical analysis of level-set methods beyond fluid dynamics.

In computational fluid dynamics, the level-set equation is often used to represent a moving interface. In this context, the level-set approach has several advantages, such as a very accurate approximation of the mean curvature and a straightforward handling of topological changes of the interface (e.g.,\ breakup and coalescence of droplets).
In a numerical simulation, it is common to choose an initial level-set function $\phi^0$ that coincides locally with the signed distance function of a given closed surface $\interface(0)$, where  $\phi^0$ is characterized by $|\nabla \phi^0| \equiv 1$ in a neighborhood of $\interface(0)$.
However, it is known that the local signed distance property is not preserved by \eqref{transport}, i.e., $f(t,\cdot)$ does not coincide even locally with the signed distance function of $\interface(t)$ for $t>0$ in general.
 In fact, a short calculation \cite{F} shows that, along each curve $x(\cdot)$ determined by \eqref{1ODE} (it is called a {\it characteristic curve}) such that $x(t)\in\interface(t)$,
  \begin{eqnarray}
 \frac{\rm d}{{\rm d} t}|\nabla f(t,x(t))| = -|\nabla f|\langle(\nabla v) \nu , \nu \rangle \, (t,x(t))
\end{eqnarray}
holds for a classical solution $f$ of the standard level-set equation \eqref{transport}.
Here, $\langle\cdot  , \cdot \rangle$ stands for the inner product of $\R^3$.
Unfortunately, problems with the numerical accuracy emerge if $|\nabla f|$  becomes too small or too large, which is the case in general, even though the non-degeneracy condition \eqref{1non} is mathematically guaranteed.
This is an important point in practice: on the one hand, it must be possible to resolve  $|\nabla f|$ by the computational mesh, which implies an upper limit for $|\nabla f|$ related to the mesh size; on the other hand, too small values of $|\nabla f|$ lead to an inaccurate positioning of the interface, the normal field, the mean curvature field, etc. in the numerical algorithm.
In order to keep the norm of the gradient approximately constant, so-called ``reinitialization'' methods \cite{Sussman1994,Sethian1996,Sussman1999} have been developed. Typically, an additional PDE is solved that computes a new function $\tilde{f}$ with the same zero contour but with a predefined gradient norm (e.g., $|\nabla \tilde{f}|=1$ on the level-set).
In \cite{SOG}, the authors developed an alternative numerical method to control the size of the gradient based on the level-set equation with a suitable source term  that is determined by an extra equation, where the reinitialization procedure was no longer necessary.
These methods might be computationally expensive. Moreover, it is known that many reinitialization methods struggle with extra difficulties if the interface touches the domain boundary $\p\Omega$ \cite{DellaRocca2014} (i.e., if a so-called ``contact line'' is formed; see Section~\ref{section:contact-line-case}).

In order to control the norm of the gradient within a single PDE, the following nonlinear modification of  the level-set equation has been introduced\footnote{This modification has first been introduced by Ilia Roisman in the lecture entitled \emph{Implicit surface method for numerical simulations of moving interfaces},
given at the international workshop on ‘Transport Processes at Fluidic Interfaces -- from Experimental to Mathematical Analysis’, Aachen, Germany, December 2011.} in the literature of computational fluid dynamics (see \cite{F, H} for details):
\begin{eqnarray}\label{m-transport}
&& \left\{
\begin{array}{lll}
&\dis  \!\!\!\!\frac{\p \phi}{\p t}(t,x)+v(t,x)\cdot\nabla \phi(t,x)=\phi(t,x)R(t,x,\nabla \phi(t,x))
\mbox{\quad in $\Theta\subseteq (0,\infty)\times\Omega$}
,\medskip \\
&\dis \!\!\!\! \phi(0,\cdot)=\phi^0
\mbox{\quad on $\Omega$}.
\end{array}
\right.
\end{eqnarray}
Note that, from here on, we rather use $\phi$ instead of $f$ to stress the fact that we deal with a modified level-set equation. Since the source term on the right-hand side is chosen proportional to the level-set function $\phi$, the modification term vanishes on the zero interface and, as seen in Section 2, one can show that the evolution of the zero level-set is unaffected by the modification (in fact, the configuration component of the characteristic ODEs for \eqref{m-transport} becomes  \eqref{1ODE} on the level-set). Moreover, a suitable choice of the nonlinear function $R$ allows to control the evolution of $|\nabla\phi|$ (at least locally at the level-set).
A formal calculation \cite{F} shows that, by choosing
\begin{align}\label{original-source-term}
R(t,x,p) := \left\langle\nabla v(t,x) \frac{p}{|p|} , \frac{p}{|p|} \right\rangle,
\end{align}
we indeed obtain
 \begin{eqnarray}\label{1modify}
 \frac{\rm d}{{\rm d} t}|\nabla \phi(t,x(t))|\equiv 0, \quad \forall\,t\ge0
\end{eqnarray}
along each characteristic curve $x(t)$ of \eqref{m-transport} such that $x(t)\in\interface(t)$ for all $t\ge0$
or, equivalently,
\begin{eqnarray*}
\dis\forall\, t\in[0, \infty),\,\,\,\forall\,x\in \interface(t),\,\,\,\exists\, \xi\in \interface(0)\mbox{ such that } |\nabla \phi(t,x)|=|\nabla\phi^0(\xi)|.
\end{eqnarray*}
We will prove this statement rigorously using the method of characteristics (see problem \eqref{problem1} in Section~\ref{section:classical_solution}). Notice that, in general, the property \eqref{1modify} only holds locally at the level-set. The signed distance function of $\interface(t)$ itself does not solve \eqref{transport} nor \eqref{m-transport} in general, but rather a non-local PDE, cf.\ Lemma~3.1 in \cite{H}. From the numerical perspective, it is of interest to study a formulation like \eqref{m-transport} because the advection of the interface and the preservation of the norm of the gradient are combined into one single PDE, i.e., into a monolithic approach. In addition to the choice \eqref{original-source-term}, we will also study a variant in which a cut-off function is applied such that the nonlinear source term is only active in a neighborhood of the level-set (see problem \eqref{RRRR} below) and another simpler modified level-set equation, which only keeps the norm of the gradient within given bounds; see the initial value problem \eqref{problem2-2} below. We refer to \cite{F} for a numerical investigation of \eqref{m-transport}.

Now, we move to the mathematical analysis of \eqref{m-transport}. It is important to note  that, due to the nonlinear source term in \eqref{m-transport}, the ODE \eqref{1ODE} is no longer the characteristic ODE of \eqref{m-transport}; instead, the system of ODEs \eqref{2chara}-\eqref{2chara3} defines the characteristic curves of \eqref{m-transport}. See Appendix 1 for more details on the method of characteristics as applied to Hamilton-Jacobi equations.
Furthermore, since \eqref{m-transport} is a first order fully nonlinear PDE, the mathematical analysis of \eqref{m-transport} is not at all as simple as that of \eqref{transport}, even if $v$ and $R$ are smooth enough.
Existence of a classical solution on the whole domain within an arbitrary time interval is no longer possible in general, i.e., the notion of viscosity solutions is necessary.  Then, it is expected that  the following statements hold true for $R$ given by \eqref{original-source-term} or its variants:
\begin{enumerate}
\item[(i)]   \eqref{m-transport} provides a level-set that is identical to the original one provided by \eqref{transport} for all $t\ge0$;
\item[(ii)]  \eqref{m-transport} admits a unique classical solution $\phi$ at least in a $t$-global tubular neighborhood of the level-set (see its definition in Section 2) so that the normal field and mean curvature field are well-defined by $\phi$ and the property \eqref{1modify} (or,  less restrictively, an a priori bound of $|\nabla\phi|$) holds on the level-set for all $t\ge0$;
\item[(iii)]  \eqref{m-transport} admits a unique  global-in-time viscosity solution defined  on $[0,\infty)\times\bar{\Omega}$;
\item[(iv)]  If initial data is $C^2$-smooth, the viscosity solution $\tilde{\phi}$ coincides  with the local-in-space classical solution $\phi$  in a $t$-global tubular neighborhood of the level-set, i.e.,  partial $C^2$-regularity of $\tilde{\phi}$.
\end{enumerate}

The purpose of the current paper is to provide full proofs of (i)-(iv) for the problem \eqref{m-transport} with a given smooth velocity field $v$ and the above-mentioned $R$, where mathematical analysis on the system of \eqref{m-transport} and Navier-Stokes type equations for $v$ is an interesting future work.
We will exploit the method of characteristics to show (ii) and (i) for the smooth solution; usually,  the method of characteristics works only within  a short time interval; however, since the nonlinearity of   \eqref{m-transport} becomes arbitrarily small near the level-set, on which $|\nabla \phi|$ is appropriately controlled as well,  we may iterate  the method of characteristics countably many times with a shrinking  neighborhood of the level-set to construct a time global solution defined  in a $t$-global tubular neighborhood of the level-set.
To show  (iii) and (i) for the viscosity solution, we will apply the standard theory of viscosity solutions to \eqref{m-transport} with a boundary condition arising formally  from the classical solutions.
To prove (iv), we adapt the idea of localized doubling the numbers of variables for the comparison principle of viscosity solutions within a cone of dependence; the difficulty is that we cannot have a cone of dependence that contains a $t$-global tubular neighborhood of the level-set; we will demonstrate a new version of localized doubling the numbers of variables with an unusual choice of a penalty function  in a $t$-global tubular neighborhood of the level-set.
We emphasize  that the result (iv) is particularly important from application points of view in the sense that, once a continuous viscosity solution is obtained, it provides the level-set, its normal field and mean curvature field with the necessary regularity being guaranteed;  numerical construction of a viscosity solution on the whole domain would be easier  than that of a local-in-space smooth solution; there is huge literature pioneered by \cite{Crandall-Lions} on rigorous numerical methods of viscosity solutions.

Finally, we compare our results on (i)--(iv) with  the work \cite{H}.
In \cite{H},  the author  formulated a modification of the initial value problem of a general  Hamilton-Jacobi equation with an autonomous Hamiltonian (including the linear transport equation with $v=v(x)$) on the whole space and proved the existence of a unique viscosity solution, where the modification  is essentially the same as \eqref{original-source-term};  owing to the modification, he showed that the (continuous) viscosity solution of the modified equation stays close to the signed distance function of its own level-set with good upper/lower estimates, from which he obtained differentiability of the viscosity solution on the level-set with the norm of the derivative to be one.  Additional regularity of the viscosity solution away from the level-set remained open. Our current paper provides a stronger partial regularity property of viscosity solutions in the same context as  \cite{H}.

\setcounter{section}{1}
\setcounter{equation}{0}
\section{$C^2$-solution on tubular neighborhood of level-set}\label{section:classical_solution}
Let $\Omega\subset\R^3$ be a bounded connected open set and  $v=v(t,x)$ be a given smooth function defined in $[0,\infty)\times\bar{\Omega}$. Our investigation relies on certain properties of the flow generated by $v$ that satisfies certain conditions; in particular, we require {\it flow invariance of $\bar{\Omega}$ and $\p\Omega$, i.e., the solution $x(s)=x(s;s_0,\xi)$ of \eqref{1ODE} with initial condition $x(s_0)=\xi$, $(s_0,\xi)\in [0,\infty)\times\bar{\Omega}$ (resp. $(s_0,\xi)\in [0,\infty)\times\p\Omega$) uniquely exists and stays in $\bar{\Omega}$ (resp. $\p\Omega$) for all  $s\in[0,\infty)$}. Note that since $\bar{\Omega}$ is compact, local-in-time invariance implies global-in-time  invariance. Typical examples of $v$ fulfilling our requirements are
\begin{itemize}
\item[(i)] $\p \Omega$ is smooth and $v(t,x)\cdot \nu(x)=0$ for all $x\in \p\Omega$ and $t\ge 0$, where $\nu$ is the unit normal of $\p\Omega$ (cf., non-penetration condition in fluid dynamics),
\item[(ii)] $v(t,x)=0$  for all $x\in \p\Omega$ and $t\ge 0$ (cf., non-slip condition in fluid dynamics).
\end{itemize}
In the current paper, we consider a more general situation based on the theory of ODEs on closed sets and flow invariance.  For this purpose, we introduce  the so-called Bouligand contingent cone $T_K(x)$ of  an arbitrary closed set  $K\subset\R^d$ at $x\in K$ as
\begin{equation*}\label{eq:10}
T_K(x):=\left\{z \in \R^d\,\Big|\, \liminf\limits_{h \to 0+} \frac{ {\rm dist}\, (x+ hz,K)}{h}=0\right\} \text{ for } x \in K.
\end{equation*}
We say that $y\in\R^d$ is subtangential to $K$ at a point $x\in K$, if $y\in T_K(x)$. Note that $T_K(x)=\R^d$ for $x$ in the interior of $K$. We shall employ the following result on  flow invariance.
\begin{Lemma}\label{flow-invariance}
Let $J=(a,b)\subset \R$, $K \subset \R^d$ compact and $g: J \times K \to \R^d$ (jointly) continuous and locally Lipschitz in $x\in K$. Then, the following holds true:
\begin{enumerate}
\item[(a)] Suppose that $\pm g$ are  subtangential to $K$, i.e.,
\begin{equation*}
\pm g(s,x) \in T_K(x), \quad\forall\, s \in J, \,\,\,\forall\,  x \in K.
\end{equation*}
Then, given any $s_0 \in J$ and $x_0 \in K$, the initial value problem
$$x'(s)=g(s,x(s)),\quad x(s_0)=x_0$$	
has a unique solution defined on $J$ that stays in $K$.
\item[(b)] Suppose that  $\pm g$ are  subtangential to $\p K$, i.e.,
$$\pm g(s,x) \in T_{\p K}(x), \quad\forall\, s \in J, \,\,\,\forall\,  x \in \p K.$$
Then, the sets $K$, $\partial K$ and $K\setminus \p K$  are flow invariant.
\end{enumerate}
\end{Lemma}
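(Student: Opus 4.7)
The plan is to reduce both parts of the lemma to the classical Nagumo--Brezis flow-invariance theorem for ODEs on closed sets, combined with a time-reversal trick and the uniqueness that comes for free from the local Lipschitz assumption in $x$. Recall that the Nagumo--Brezis theorem asserts: if $K\subset\R^d$ is closed, $g:J\times K\to\R^d$ is continuous, and $g(s,x)\in T_K(x)$ for every $(s,x)\in J\times K$, then for each $(s_0,x_0)\in J\times K$ the forward IVP $x'=g(s,x)$, $x(s_0)=x_0$ admits a solution that remains in $K$ on its interval of existence. I would simply quote this from a standard ODE reference. Since $K$ is compact, the solution cannot blow up, so it extends to $[s_0,b)\cap J$, and the local Lipschitz hypothesis makes it unique.

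For part (a), I would apply this statement twice: once with $g$ to obtain the forward branch in $K$, and once with the time-reversed field $\tilde g(s,x):=-g(2s_0-s,x)$ to obtain the backward branch. The subtangency $\tilde g(s,\cdot)\in T_K$ at every $s$ is a restatement of the assumption $-g(\sigma,\cdot)\in T_K$ with $\sigma=2s_0-s$, and $\tilde g$ inherits continuity and local Lipschitz regularity from $g$. Letting $y$ be the forward solution of $y'=\tilde g(s,y)$, $y(s_0)=x_0$, the map $s\mapsto y(2s_0-s)$ supplies the backward half of the trajectory in $K$. Gluing the two halves and invoking uniqueness on each side yields a unique solution of the IVP defined on all of $J$ that stays in $K$.

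For part (b), first observe that $\p K$ is compact as a closed subset of the compact set $K$, and that the hypothesis $\pm g\in T_{\p K}$ on $\p K$ is exactly the hypothesis of part (a) with $K$ replaced by $\p K$. Hence solutions starting on $\p K$ remain on $\p K$ throughout $J$, which is flow invariance of $\p K$. Flow invariance of $K$ is just part (a) applied to $K$ itself, using that $T_{\p K}(x)\subseteq T_K(x)$ at boundary points (so $\pm g\in T_K$ on $\p K$) while $T_K(x)=\R^d$ at interior points. For $K\setminus\p K$, suppose for contradiction a trajectory $x(\cdot)$ with $x(s_0)\in K\setminus\p K$ satisfies $x(s_1)\in\p K$ for some $s_1\in J$. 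The unique solution $\tilde x$ with $\tilde x(s_1)=x(s_1)$ remains in $\p K$ on all of $J$ by the invariance of $\p K$ just proved, forcing $\tilde x(s_0)\in\p K$. Uniqueness on $J$ yields $x\equiv\tilde x$, contradicting $x(s_0)\notin\p K$.

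The principal obstacle is really the Nagumo--Brezis theorem itself: its standard proof constructs Euler-type broken-line approximations, using at each step the definition of the contingent cone to pick an increment that lands within ${\rm dist}(\,\cdot\,,K)=o(h)$ of $K$, then passes to the limit via Arzel\`a--Ascoli together with a projection onto $K$. Since this is well-documented in the literature, I would invoke it as a black box, after which the lemma reduces to the essentially bookkeeping arguments above.
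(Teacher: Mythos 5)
Your proposal is correct and follows essentially the same route as the paper: the authors also derive Lemma~\ref{flow-invariance} from the Nagumo-type existence theorem for ODEs on closed sets (Theorems A1 and A2 in Appendix~2) together with local Lipschitz uniqueness and compactness of $K$ for global extension. Your bookkeeping for part~(b) -- using $T_K(x)=\R^d$ at interior points and $T_{\p K}(x)\subseteq T_K(x)$ on the boundary to recover the hypothesis of part~(a) for both $K$ and $\p K$, then the uniqueness contradiction for $K\setminus\p K$ -- likewise matches the intended argument.
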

\noindent See Appendix 2 for more on flow invariance and Lemma \ref{flow-invariance}.

Now we state the hypothesis on the velocity field $v$:
\begin{itemize}
\item[(H1)]  $v\in C^0([0,\infty)\times\bar{\Omega};\R^3)\cap C^1([0,\infty)\times\Omega;\R^3)$ is locally Lipschitz in $x\in \bar{\Omega}$; $v$ is three times partially differentiable in $x$; all of the partial derivatives of $v$ belong to $C^0([0,\infty)\times \Omega;\R^3)$,
\item[(H2)]  $\pm v(s,x) \in T_{\p \Omega}(x)$ for all $s \in [0,\infty),\,\, x \in \p\Omega$,
\item[(H3)] $\displaystyle |\frac{\p v_i}{\p x_j}|$ ($i,j=1,2,3$) are bounded on $[0,\infty)\times\Omega$.
\end{itemize}
We remark that
the upcoming nonlinear modification  of the linear transport equation requires  $C^3$-smoothness of $v$ in $x$ so that its characteristic ODEs are  properly defined; due to Lemma \ref{flow-invariance}, $\bar{\Omega}$, $\p\Omega$ and $\Omega$ are flow invariant with respect to the flow $X$ of \eqref{1ODE}; $X(s,\tau,\cdot)$  is continuous on $ \bar{\Omega}$ and $C^3$-smooth in $\Omega$.
 If $\bar{\Omega}$ is a cube, for instance, (H2) implies: at each vertex,  $v$ must be equal to zero, while on each edge, $v$ may take non-zero values parallel to the edge.
\subsection{Case 1: problem with level-set being away from $\p\Omega$}
Let $\interface(0)\subset \Omega$ be a closed $C^2$-smooth surface. Let $\phi^0:\Omega\to\R$ be a $C^2$-smooth function such that
$$\mbox{$\{x\in\Omega\,|\,\phi^0(x)=0\}=\interface(0)$,\quad   $\nabla \phi^0\neq 0$ on $\interface(0)$}.$$
Let $f$ be the solution of the original level-set equation \eqref{transport}.
We keep the notation and configuration  in \eqref{1interior} and \eqref{1interface}, where we repeat
$$\interface(t):=\{x\in\Omega\,|\, f(t,x)=0\}=X(t,0,\interface(0)).$$
Note  that due to (H2) we have
$$ \interface(t)\subset \Omega,\quad \forall\,t\ge0.$$
For each $t\ge0$, let $\interface_\ep(t)$ with $\ep>0$ be the $\ep$-neighborhood of $\interface(t)$, i.e.,
$$\interface_\ep(t):=\bigcup_{x\in\interface(t)}\{y\in\R^3\,|\,|x-y|<\ep\},$$
where we  always consider $\ep>0$ such that $\interface_\ep(t)\subset \Omega$.
We say that {\it a set $\Theta\subset [0,\infty)\times\Omega$ is a ($t$-global) tubular neighborhood of the level-set $\{\interface(t)\}_{t\ge0}$, if  $\Theta$ contains
$$\bigcup_{t\ge0}\Big(\{t\}\times \interface(t)\Big),$$
and there exists a nonincreasing function $\ep:[0,\infty)\to\R_{>0}$ such that
$$\{t\}\times\interface_{\ep(t)}(t)\subset\Theta,\quad \forall\,t\ge0.$$ }
The problem under consideration is to find a  tubular neighborhood $\Theta$ of $\{\interface(t)\}_{t\ge0}$ and a $C^2$-function $\phi$ satisfying
\begin{eqnarray}\label{problem1}
&&\left\{
\begin{array}{lll}
&&\dis \frac{\p \phi}{\p t} +v\cdot \nabla \phi=\phi \Big<\big( \nabla v \big) \frac{\nabla \phi}{|\nabla \phi|}, \frac{\nabla \phi}{|\nabla \phi|}\Big>\mbox{\quad in $\Theta$},\medskip\\
&&\dis \phi(0,\cdot)=\phi^0\mbox{\quad on $\Theta|_{t=0}$}.
\end{array}
\right.
\end{eqnarray}
Note that this problem makes sense with $\phi^0$ being defined only in a neighborhood of $\interface(0)$, e.g., $\phi^0$ is given as the local signed distance function of $\interface(0)$. We state the first main result of this paper.
\begin{Thm}\label{Thm1}
There exists a  tubular neighborhood $\Theta$ of the level-set $\{\interface(t)\}_{t\ge0}$ for which \eqref{problem1} admits a unique $C^2$-solution $\phi$ satisfying
\begin{eqnarray*}
&&\dis \interface^\phi(t):= \{x\in\Omega\,|\,  \phi(t,x)=0\}=\interface(t),\quad \forall\,t\in[0,\infty),\medskip\\
&&\dis\forall\, t\in[0, \infty),\,\,\,\forall\,x\in \interface(t),\,\,\,\exists\, \xi\in \interface(0)\mbox{ such that } |\nabla \phi(t,x)|=|\nabla\phi^0(\xi)|.
\end{eqnarray*}
\end{Thm}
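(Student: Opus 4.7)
The plan is to apply the method of characteristics to \eqref{problem1} viewed as a Hamilton--Jacobi equation $\p_t\phi+H(t,x,\phi,\nabla\phi)=0$ with Hamiltonian $H(t,x,z,p):=v(t,x)\cdot p-zR(t,x,p)$ and $R(t,x,p):=\langle(\nabla v(t,x))p/|p|,p/|p|\rangle$, and then to iterate that construction countably many times over time intervals of the same small length in a shrinking spatial tube around the level-set. First I would choose $\ep_0>0$ so that $\nabla\phi^0\neq 0$ on $\interface_{\ep_0}(0)$, which by (H1) guarantees that $R$ is $C^2$ on the relevant phase-space region, and solve the characteristic system
\begin{equation*}
x'(s)=H_p,\qquad p'(s)=-H_x-H_z\,p,\qquad z'(s)=p\cdot H_p-H,
\end{equation*}
with initial data $(\xi,\nabla\phi^0(\xi),\phi^0(\xi))$, $\xi\in\interface_{\ep_0}(0)$.

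The decisive structural observation is that the $z$-equation collapses to $z'=z\,(R-p\cdot R_p)$, which is homogeneous linear in $z$; hence every characteristic emanating from $\xi\in\interface(0)$ (where $z(0)=\phi^0(\xi)=0$) satisfies $z(s)\equiv 0$. On such a curve the $x$-equation degenerates to $x'(s)=v(s,x(s))$, i.e.\ \eqref{1velocity}, so it coincides with $X(\cdot,0,\xi)$ and remains in $\interface(s)$; this already yields $\interface(t)\subset\interface^\phi(t)$. On the same curve $p'=-(\nabla v)^\tra p+Rp$, and using the identity $R|p|^2=\langle(\nabla v)p,p\rangle$ I would directly compute $\frac{d}{ds}|p|^2=0$, which is exactly the invariance of $|\nabla\phi|$ stated in the second conclusion.

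For short times $[0,T^{*}]$ the characteristic map $\Phi_s:\xi\mapsto x(s;\xi)$ is a $C^2$-diffeomorphism of a neighborhood of $\interface(0)$ onto a neighborhood of $\interface(s)$: on $\interface(0)$ itself $\Phi_s=X(s,0,\cdot)$ is a $C^2$-diffeomorphism by (H1) and Lemma~\ref{flow-invariance}, and on the full $\ep_0$-tube it is a $C^1$-small perturbation whose size is controlled linearly by $\sup|z(0)|\le\|\phi^0\|_{C^1}\ep_0$. Inverting $\Phi_s$ and setting $\phi(s,x):=z(s;\Phi_s^{-1}(x))$ yields a $C^2$-solution of \eqref{problem1} on the spacetime tube. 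The reverse inclusion $\interface^\phi(s)\subset\interface(s)$ comes from the preserved non-degeneracy $|\nabla\phi(s,x)|=|\nabla\phi^0(\xi)|>0$ on $\interface(s)$ together with an implicit-function argument that rules out extra zeros in the transverse direction, while uniqueness within the $C^2$ class follows from the classical characteristic-uniqueness argument for Hamilton--Jacobi equations.

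The main obstacle is the iteration to a time-global solution. After $T^{*}$ I would restart the method from $\phi(T^{*},\cdot)$ on a tube of smaller thickness $\ep_1<\ep_0$ and run the characteristic map for another interval of the same fixed length $T^{*}$; repeating inductively produces a $C^2$-solution on $[nT^{*},(n+1)T^{*}]\times\interface_{\ep_n}(nT^{*})$ for every $n$. The delicate point is to choose $T^{*}$ and the sequence $\{\ep_n\}$ so that the restart always succeeds. Here I would use that the perturbation size of $\Phi_s$ away from the pure flow $X$ is controlled by $\ep_n\cdot M$ with $M$ absorbing $\sup|\nabla v|$ and $\sup|R_x|$ via (H3), and that the preserved value of $|\nabla\phi|$ on the level-set keeps $R$ and its derivatives bounded on the relevant phase region; choosing $\ep_{n+1}$ so that $\ep_{n+1}e^{MT^{*}}$ is small enough (for instance geometrically decaying) guarantees that $\Phi_s$ remains a $C^2$-diffeomorphism on $[nT^{*},(n+1)T^{*}]$ with a uniform $T^{*}$. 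The nonincreasing envelope $\ep(t):=\ep_n$ on $[nT^{*},(n+1)T^{*}]$ provides the thickness function required by the tubular-neighborhood definition, and patching the successive $C^2$-constructions across the grid $\{nT^{*}\}$ produces the global $C^2$-solution with both claimed properties.
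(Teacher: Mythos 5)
Your overall strategy is the same as the paper's: treat \eqref{problem1} as a Hamilton--Jacobi equation, integrate the characteristic ODEs, exploit that on the level-set the $\Phi$-equation is homogeneous (so $\Phi\equiv 0$, the $x$-equation collapses to $x'=v$, and $|p|$ is conserved), build a local $C^2$-solution by inverting the characteristic map, and then iterate over time intervals of a \emph{fixed} length $t_\ast$ using shrinking tubes $\interface_{\ep_n}(nt_\ast)$, the uniformity of $t_\ast$ being possible because $|\nabla\phi(nt_\ast,\cdot)|$ on $\interface(nt_\ast)$ inherits exactly the bounds of $|\nabla\phi^0|$ on $\interface(0)$. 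All of this matches the paper in spirit and in the essential structural computations.

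The genuine gap is in the step ``for short times $[0,T^\ast]$ the characteristic map $\Phi_s:\xi\mapsto x(s;\xi)$ is a $C^2$-diffeomorphism\dots on the full $\ep_0$-tube it is a $C^1$-small perturbation.'' Smallness of the perturbation of $X(s,0,\cdot)$ together with invertibility of the Jacobian $D_\xi x(s;\xi)$ gives \emph{local} injectivity, but not \emph{global} injectivity of $\Phi_s$ on the tube $\interface_{\ep_0}(0)$: two well-separated points of the tube (e.g.\ on opposite sides of $\interface(0)$, or on distant parts of the surface) could still be mapped to the same image. The Taylor-expansion argument only works when the segment joining the two preimages lies inside the tube, which is not automatic. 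The paper explicitly flags this (``$\det D_\xi x(s;\xi)\neq0$ everywhere on $\Sigma_\ep(0)$ is not enough for the injectivity in general'' and the discussion of the line-segment issue before \eqref{2contra}) and resolves it by a compactness/contradiction argument: if injectivity failed for every $\ep>0$, one extracts subsequences and shows the offending pair of points must shrink together; a limit argument then either contradicts injectivity of $X(s,0,\cdot)|_{\interface(0)}$ or reduces to the local Taylor estimate with the determinant bound \eqref{t-star}, yielding a contradiction. You need to supply this (or an equivalent modulus-of-injectivity argument on the compact set $\overline{\interface_{\ep_0}(0)}$), both in the first interval and uniformly at each restart, before concluding.

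Two smaller remarks. First, the reverse inclusion $\interface^\phi(s)\subset\interface(s)$ does not really come from an implicit-function argument at $\interface(s)$ (that only controls the zero set near $\interface(s)$); the clean argument is that the $\Phi$-ODE is linear homogeneous, so $\Phi(s;\xi)=\phi^0(\xi)\exp(\int_0^s\cdots)$ never vanishes when $\phi^0(\xi)\neq0$, hence $\phi(s,x)=\Phi(s;\varphi(s,x))\neq 0$ for $x\notin\interface(s)$. Second, in the iteration you should make explicit that $t_\ast$ is chosen once and for all from a determinant condition depending only on the constants $V_0,\dots,V_4$ (which in turn depend only on $v$, $\sup_{\interface(0)}|\nabla\phi^0|$, $\inf_{\interface(0)}|\nabla\phi^0|$), and that these bounds are exactly reproduced at each restart; without this uniformity the scheme could stall at a finite time.
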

\begin{proof}
We treat the PDE in \eqref{problem1} as the Hamilton-Jacobi equation
\begin{eqnarray}\label{HJ22}
\p_t\phi+H(t,x,\nabla \phi,\phi)=0,
\end{eqnarray}
generated by the Hamiltonian $H: [0,\infty)\times\Omega\times\R^3\times\R\to\R$ defined as
\begin{eqnarray*}
H(t,x,p,\Phi)&:=&v(t,x)\cdot p-\Phi\Big<\nabla v(t,x) \frac{p}{|p|}, \frac{p}{|p|}\Big>\\
&=&v(t,x)\cdot p-\2\Phi\Big<\Big(\nabla v)+(\nabla v)^\tra\Big) \frac{p}{|p|}, \frac{p}{|p|}\Big>.
\end{eqnarray*}
Set
\begin{eqnarray*}
D(v):=\frac{\nabla v+(\nabla v)^\tra}{2},\quad
\tilde{D}(v,p):=\frac{\p}{\p x}\Big<\big( \nabla v(t,x) \big) \frac{p}{|p|}, \frac{p}{|p|}\Big>.
\end{eqnarray*}
The characteristic ODEs of \eqref{HJ22} are given as  (see Appendix 1 for more details)
\begin{eqnarray}\label{2chara}
 x'(s)&=& \frac{\p H}{\p p}(s,x(s),p(s),\Phi(s))\\\nonumber
 &=&v(s,x(s))\\\nonumber
 && -2\frac{\Phi(s)}{|p(s)|}\Big[
D(v(s,x(s)))\frac{p(s)}{|p(s)|}
 -\Big<D(v(s,x(s)))\frac{p(s)}{|p(s)|},\frac{p(s)}{|p(s)|}\Big> \frac{p(s)}{|p(s)|}
\Big], \\\label{2chara2}
 p'(s)&=&-\frac{\p H}{\p x} (s,x(s),p(s),\Phi(s))-\frac{\p H}{\p\Phi}(s,x(s),p(s),\Phi(s))p(s)\\\nonumber
&=&-(\nabla v(s,x(s)))^\tra p(s)
+\Big<D(v(s,x(s))) \frac{p(s)}{|p(s)|}, \frac{p(s)}{|p(s)|}\Big>p(s)\\\nonumber
&&+\Phi(s) \tilde{D}(v(s,x(s)),p(s)),\\\label{2chara3}
\ \Phi'(s)&=& \frac{\p H}{\p p}(s,x(s),p(s),\Phi(s))\cdot p(s)-H(s,x(s),p(s),\Phi(s)) \\\nonumber
&=&  \Phi(s)\Big<D( v(s,x(s))) \frac{p(s)}{|p(s)|}, \frac{p(s)}{|p(s)|}\Big> , \\\label{2chara4}
&&\!\!\!\!\!\!\!\!\!\!\!\!\!\!\!\!\!\!x(0)=\xi,\quad p(0)=\nabla\phi^0(\xi),\quad \Phi(0)=\phi^0(\xi)\quad \mbox{(except $\xi$ such that $p(0)=0$).}
\end{eqnarray}
Note that $\tilde{D}(v(t,x),p)$ is still $C^1$-smooth because of (H1), which is required for the method of characteristics.  We sometimes use the notation $x(s;\xi),p(s;\xi),\Phi(s;\xi) $ to specify the initial point.
Our proof is based on the investigation of the  variational equations of the characteristic ODEs \eqref{2chara}--\eqref{2chara3} for each $\xi \in\interface(0)$ to ensure the invertibility of $x(s;\cdot)$ in a small neighborhood of $\interface(s)$ for each $s\ge0$. In a general argument of the method of characteristics, such invertibility is proven only within a small time interval. Below, we will show an iterative scheme to extend the time interval in which the  invertibility holds with a shrinking neighborhood of $\interface(s)$ as $s$ becomes larger.

For each $\xi \in\interface(0)$, as long as  $x(s;\xi),p(s;\xi),\Phi(s;\xi) $ exist, it holds that
\begin{eqnarray}\label{1xsxs}
\Phi(s;\xi)&\equiv&\Phi(0;\xi)=0,\\\label{2xsxs}
x'(s) &=&v(s,x(s)), \quad x(s)\in\interface(s),\\\label{3xsxs}
p'(s)&=&-(\nabla v(s,x(s)))^\tra p(s)
+\Big<D(v(s,x(s))) \frac{p(s)}{|p(s)|}, \frac{p(s)}{|p(s)|}\Big>p(s),\\\label{4xsxs}
p'(s)\cdot p(s)&=&\2\frac{\rm d}{{\rm d} s}|p(s)|^2=0,\quad |p(s)|^2\equiv|p(0)|^2=|\nabla \phi^0(\xi)|^2\neq0,
\end{eqnarray}
where we note that due to (H2) applied to \eqref{2xsxs}, the above equalities \eqref{1xsxs}--\eqref{4xsxs}  hold for all $s\ge0$ together with
$$0<\inf_{\interface(0)}|\nabla \phi^0|\le |p(s)|\le \sup_{\interface(0)}|\nabla \phi^0|<\infty,\quad\forall\,s\ge0.$$
Hence,  for each $\xi \in\interface(0)$, there exist $\frac{\p x}{\p\xi}(s)=\frac{\p x}{\p\xi}(s;\xi)$ and $\frac{\p \Phi}{\p\xi}(s)=\frac{\p \Phi}{\p\xi}(s;\xi)$ for all $s\ge0$ satisfying
\begin{eqnarray}\label{1vvv2}
&&\frac{\rm d}{{\rm d} s}\frac{\p x}{\p\xi}(s)=\nabla v(s,x(s))\frac{\p x}{\p\xi}(s)-\frac{2}{|p(s)|}  b(s)\otimes  \frac{\p \Phi}{\p \xi}(s),\quad \frac{\p x}{\p\xi}(0)=id,\\\label{2vvv2}
&&\frac{\rm d}{{\rm d} s}\frac{\p\Phi}{\p\xi}(s)=\frac{\p\Phi}{\p\xi}(s)\Big<D( v(s,x(s))) \frac{p(s)}{|p(s)|}, \frac{p(s)}{|p(s)|}\Big> , \quad \frac{\p\Phi}{\p\xi}(0)=\nabla \phi^0(\xi),
\end{eqnarray}
where
$$b(s):=D(v(s,x(s)))\frac{p(s)}{|p(s)|}
 -\Big<D(v(s,x(s)))\frac{p(s)}{|p(s)|},\frac{p(s)}{|p(s)|}\Big> \frac{p(s)}{|p(s)|},\quad |p(s)|\equiv|p(0)|.$$
Hereafter, $V_0,V_1,V_2,V_3,V_4$ will denote constants depending only on $v$, $\sup_{\interface(0)}|\nabla\phi^0|<\infty$ and $\inf_{\interface(0)}|\nabla\phi^0|>0$.
Due to  (H3),
it holds that for any $\xi\in\interface(0)$,
\begin{eqnarray*}
&&|\nabla v(s,x(s)) y|\le V_0|y|,\quad \forall\,y\in\R^3,\\
&&\Big| D(v(s,x(s)))\frac{p(s)}{|p(s)|} \Big|\le V_1,\quad
\Big| \Big<D(v(s,x(s)))\frac{p(s)}{|p(s)|},\frac{p(s)}{|p(s)|}\Big> \frac{p(s)}{|p(s)|} \Big|\le V_1,\quad |b(s)|\le V_1.
\end{eqnarray*}
Observe that for each $\xi \in\interface(0)$, we have from \eqref{2vvv2},
\begin{eqnarray*}
&&\frac{\rm d}{{\rm d} s}\frac{\partial\Phi}{\p \xi}(s)=\frac{\partial\Phi}{\p \xi}(s)\Big<D( v(s,x(s))) \frac{p(s)}{|p(s)|}, \frac{p(s)}{|p(s)|}\Big>,\\
&&\Big|\frac{\p\Phi}{\p\xi}(s)\Big|=|\nabla \phi^0(\xi) e^{\int_0^s \langle D(v(s,x(\tau))) p(\tau), p(\tau)\rangle|d\tau }|\le  V_2e^{V_1s }, \quad \forall\,s\ge0,\\
&&\Big|\frac{\p \Phi}{\p \xi_i}(s) b(s)\Big|\le V_1V_2e^{V_1s }\quad (i=1,2,3),\quad \forall\,s\ge0.
\end{eqnarray*}
For each $i=1,2,3$ and $\xi \in\interface(0)$, we have from \eqref{1vvv2}
\begin{eqnarray*}
\frac{\rm d}{{\rm d} s}\frac{\p x}{\p\xi_i}(s)&=&\nabla v(s,x(s))\frac{\p x}{\p\xi_i}(s)-\frac{2}{|p(0)|}\frac{\p \Phi}{\p \xi_i}(s) b(s),\quad \frac{\p x}{\p\xi_i}(0)=e^i,\\
\frac{\rm d}{{\rm d} s}\frac{\p x}{\p\xi_i}(s)\cdot \frac{\p x}{\p\xi_i}(s)&=&\nabla v(s,x(s))\frac{\p x}{\p\xi_i}(s)\cdot \frac{\p x}{\p\xi_i}(s)-\frac{2}{|p(0)|}\frac{\p \Phi}{\p \xi_i}(s) b(s)\cdot \frac{\p x}{\p\xi_i}(s),\\
\2\frac{\rm d}{{\rm d} s}\Big|\frac{\p x}{\p\xi_i}(s)\Big|^2
&\le& V_1\Big|\frac{\p x}{\p\xi_i}(s)\Big|^2+ \frac{2}{\dis \inf_{\interface(0)}|\nabla\phi^0|} V_1V_2e^{V_1s}\Big|\frac{\p x}{\p\xi_i}(s)\Big|\\
&\le&  (V_1+  1) \Big|\frac{\p x}{\p\xi_i}(s)\Big|^2+  \Big(\frac{V_1V_2e^{V_1s}}{\dis \inf_{\interface(0)}|\nabla\phi^0|}\Big)^2.
\end{eqnarray*}
Gronwall's inequality implies
\begin{eqnarray}\label{est21}
\Big|\frac{\p x}{\p\xi_i}(s;\xi)\Big|^2&\le& e^{2(1+V_1)s}\Big\{1+V_1\Big(\frac{V_2}{\dis \inf_{\interface(0)}|\nabla\phi^0|}\Big)^2(e^{2V_1s} -1)\Big\}\\\nonumber
&\le& V^2_3,\quad \forall\, s\in[0,1],\,\,\,\forall\,\xi\in\interface(0).\end{eqnarray}
It follows from \eqref{est21} that
\begin{eqnarray}\label{est22}
&\dis \Big|\frac{\rm d}{{\rm d} s}\frac{\p x}{\p\xi_i}(s;\xi)\Big|\le V_0 \Big|\frac{\p x}{\p\xi_i}(s;\xi)\Big|+2V_1V_2 e^{V_1s}\le V_4, \quad \forall\, s\in[0,1],\,\,\,\forall\,\xi\in\interface(0).
\end{eqnarray}
Due to the continuity in \eqref{2chara}--\eqref{2chara4} and  \eqref{est21}--\eqref{est22}, we find $\ep_1>0$ such that
\begin{eqnarray}\nonumber
&& \interface_{\ep_1}(0)\subset \Omega,\quad \mbox{ \eqref{2chara}-\eqref{2chara4} is solvable  for  all $s\in[0,1]$ and $\xi\in \interface_{\ep_1}(0)$},\\\label{est23}
&&
\Big|\frac{\rm d}{{\rm d} s}\frac{\p x}{\p\xi_i}(s;\xi)\Big|< 2V_4, \quad \forall\, s\in[0,1],\,\,\,\forall\,\xi\in\interface_{\ep_1}(0).
\end{eqnarray}
Fix a number $t_\ast\in(0,1]$ such that
\begin{eqnarray}\label{t-star}
1- \{3\cdot (2V_4 t_\ast)+6\cdot (2V_4 t_\ast)^2+6\cdot(2V_4 t_\ast)^3\}>0.
\end{eqnarray}
We will show that
\begin{eqnarray}\label{2contra}
&\mbox{\it \quad  $\exists\, \tilde{\ep}_1\in(0,\ep_1]$ such that  $\interface_{\tilde{\ep}_1}(0)\ni\xi\mapsto x(s;\xi)$ is injective for each $0\le s\le t_\ast$}.
\end{eqnarray}
First, we give an auxiliary explanation on how to prove \eqref{2contra}. In order to prove the injectivity of $x(s;\cdot):\Sigma_{\ep}(0)\to x(s;\Sigma_{\ep}(0))$ for each fixed $s\in[0,s_\ast]$ ($\ep>0$ and $s_\ast>0$ are some constants), we would take one of the following strategies:
\begin{itemize}
\item[(i)]  Fix $\ep>0$ and take a sufficiently small $s_\ast>0$,
\item[(ii)] Fix $s_\ast>0$ and take a sufficiently small $\ep>0$,
\end{itemize}
where we note that $\det D_\xi x(s;\xi)\neq0$ everywhere on $\Sigma_{\ep}(0)$ is not enough for the injectivity in general.
In our proof of Theorem \ref{Thm1}, we need to repeat the argument from $[0,s_\ast]$ within $[s_\ast,2s_\ast]$ to demonstrate infinite iteration with the fixed $s_\ast>0$. Hence, we take the strategy (ii) with $s_\ast=t_\ast$ given in \eqref{t-star} and sufficiently small $\ep=\tilde{\ep}_1\in (0,\ep_1]$.
Then, what we need to show is that
$$ x(s;\tilde{\zeta})=x(s;\zeta)\mbox{ for $\zeta,\tilde{\zeta}\in \Sigma_{\ep}(0)$}\Rightarrow \tilde{\zeta}=\zeta.$$
If the line segment joining $\zeta$ and $\tilde{\zeta}$ is included in $\Sigma_{\ep}(0)$, we may immediately apply Taylor's approximation to have
$$0=x(s;\tilde{\zeta})-x(s;\zeta)=\Lambda (\tilde{\zeta}-\zeta) \quad \mbox{with some $(3\times3)$-matrix $\Lambda$}$$
and get $\tilde{\zeta}=\zeta$ via $\det \Lambda>0$, where  $t_\ast$ in \eqref{t-star} is given so that $\det \Lambda>0$ holds in $\Sigma_{\ep_1}(0)$.
However, it is not a priori clear if the line segment joining $\zeta$ and $\tilde{\zeta}$ is included in $\Sigma_{\ep}(0)$ for each $s\in[0,t^\ast]$.
This is a major difficulty to prove \eqref{2contra} directly.

Now, we  prove \eqref{2contra} by contradiction.
Suppose that \eqref{2contra} does not hold. Then, we find a sequence $\{\delta_j\}_{j\in\N}\subset(0,\ep_1]$  with $\delta_j\to0$ as $j\to\infty$ and $\{t_j\}_{j\in\N}\subset(0,t_\ast]$ for which
\begin{eqnarray}\label{22contra}
\exists\,\zeta,\tilde{\zeta}\in \interface_{\delta_j}(0)\mbox{ such that }\zeta\neq\tilde{\zeta},\,\,\,x(t_j;\zeta)=x(t_j;\tilde{\zeta})\mbox{ for each $j\in\N$}.
\end{eqnarray}
Define $d_j :=\sup\{ |\tilde{\zeta}-\zeta|\,|\,  \eqref{22contra}\mbox{ holds}  \}$. We prove that $d_j\to0$ as $j\to\infty$. If not, we find $\eta>0$ and a subsequence of $\{d_j\}_{j\in\N}$ (still denoted by the same symbol) such that $d_j\ge \eta$ for all $j$.
We further take out subsequences so that $t_j\to s\in[0,t_\ast]$ as $j\to\infty$.
Then, for each $j\in\N$, there exist $\zeta_j$ and $\tilde{\zeta}_j$ such that
$$|\zeta_j-\tilde{\zeta}_j|\ge \frac{\eta}{2},\quad {\rm dist}(\zeta_j,\interface(0))\le \delta_j,\quad
{\rm dist}(\tilde{\zeta}_j,\interface(0))\le \delta_j.
$$
Taking subsequences if necessary, we see that  $\zeta_j$, $\tilde{\zeta}_j$ converge to some $\xi,\tilde{\xi}\in \interface(0)$ as $j\to\infty$, respectively, where $|\xi-\tilde{\xi}|\ge \frac{\eta}{2}$. The limit $j\to\infty$ in $x(t_j;\zeta_j)=x(t_j;\tilde{\zeta}_j)$ implies $x(s;\xi)=x(s;\tilde{\xi})$ with $\xi,\tilde{\xi}\in\interface(0)$ and $\xi\neq\tilde{\xi}$.
This is a contradiction, because $x(s;\cdot)|_{\Sigma(0)}$ is given by the flow of \eqref{1ODE}.
Consequently, $d_j\to0$ as $j\to\infty$.
Hence, for all $j$ sufficiently large,  we find  that $\zeta,\tilde{\zeta}$ in \eqref{22contra},  denoted by $\zeta_j$, $\tilde{\zeta}_j$, are included in the $\ep_1$-neighborhood of some $\xi_j\in\interface(0)$;
 for $i=1,2,3$,  Taylor's approximation within the $\ep_1$-neighborhood of $\xi_j$ yields
\begin{eqnarray*}
0&=&x_i(t_j;\tilde{\zeta}_j)-x_i(t_j,\zeta_j)\\
&=&x_i(0;\tilde{\zeta}_j)-x_i(0;\zeta_j)+(x_i(t_j;\tilde{\zeta}_j)- x_i(t_j,\zeta_j))-(x_i(0;\tilde{\zeta}_j)-x_i(0;\zeta_j))\\
&=&(\tilde{\zeta}_j-\zeta_j)_i+\frac{dx_i}{ds}(\lambda_{ji}t_j;\tilde{\zeta}_j)t_j- \frac{dx_i}{ds}(\lambda_{ji}t_j,\zeta_j))t_j, \quad \lambda_{ji}\in (0,1),\\
&=&(\tilde{\zeta}_j-\zeta_j)_i+\frac{\rm d}{{\rm d} s}\frac{\p x_i}{\p\xi}(\lambda_{ji}t_j;\tilde{\zeta}_j +\lambda'_{ji}(\tilde{\zeta}_j -\zeta_j ) )t_j\cdot (\tilde{\zeta}_j-\zeta_j), \quad \lambda'_{ji}\in (0,1).
\end{eqnarray*}
Therefore, by \eqref{est23}, we obtain
\begin{eqnarray*}
&&(I+A)(\tilde{\zeta}_j-\zeta_j)=0,\\
&&\mbox{ $I$ is the  ($3\times3$)-identity matrix,  $A$ is a ($3\times3$)-matrix with $|A_{kl}|<2V_4 t_j\le 2V_4 t_\ast$},\\
&&\det(I+A)\ge1- \{3\cdot (2V_4 t_\ast)+6\cdot (2V_4 t_\ast)^2+6\cdot(2V_4 t_\ast)^3\}>0,
\end{eqnarray*}
which leads to $\tilde{\zeta}_j=\zeta_j$. This is a contradiction and we conclude \eqref{2contra}.
Note that we also obtain $\det \frac{\p x}{\p \xi}(s;\xi)>0$ for all $s\in[0,t_\ast]$ and $\xi\in\interface_{\tilde{\ep}_1}(0)$; this follows from $ \frac{\p x}{\p \xi}(s;\xi)= \frac{\p x}{\p \xi}(0;\xi)+ \frac{\p x}{\p \xi}(s;\xi)- \frac{\p x}{\p \xi}(0;\xi)=I+[\frac{\rm d}{{\rm d} s}  \frac{\p x_k}{\p \xi_l}(\lambda_{kl}s;\xi)s]$, $\lambda_{kl}\in(0,1)$.

Define
\begin{eqnarray*}
&& O_{\tilde{\ep}_1}(t_\ast) :=\bigcup_{0\le s\le t_\ast}\Big( \{s\}\times \{x(s;\xi)\,|\,\xi\in\interface_{\tilde{\ep}_1}(0)\}\Big),\\
&&\psi_1: [0,t_\ast]\times \interface_{\tilde{\ep}_1}(0)\to  O_{\tilde{\ep}_1}(t_\ast),\quad \psi_1(s,\xi):=(s, x(s;\xi)),\\
&&\psi_1(t,\cdot):  \interface_{\tilde{\ep}_1}(0)\to  O_{\tilde{\ep}_1}(t_\ast)|_{s=t}=\{t\}\times \{x(t;\xi)\,|\,\xi\in\interface_{\tilde{\ep}_1}(0)\},\quad 0\le t\le t_\ast.
\end{eqnarray*}
The invertibility of $\xi\mapsto x(s;\xi)$ and $\det \frac{\p x}{\p \xi}(s;\xi)>0$ imply that $\psi_1$ and $\psi_1(t,\cdot)$
are $C^1$-diffeomorphic and that there exists $\ep_2>0$ such that
\begin{eqnarray}\label{ep2}
 \{x(t_\ast;\xi)\,|\,\xi\in\interface_{\tilde{\ep}_1}(0)\}\supset
\interface_{\ep_2}(t_\ast).
\end{eqnarray}
Let $\varphi_1:  O_{\tilde{\ep}_1}(t_\ast)\to \interface_{\tilde{\ep}_1}(0)$ be defined as
$$\psi_1(t,\xi)=(t,x)\Leftrightarrow (t,\xi)=\psi^{-1}_1(t,x)=:(t,\varphi_1(t,x)),$$
where we note that $\varphi_1(t,\interface(t))=\interface(0)$.
 Then, it follows from the general results of the method of characteristics (see Appendix 1) that the function
$$\phi_1: O_{\tilde{\ep}_1}(t_\ast) \to\R,\quad \phi_1(t,x):=\Phi(t; \varphi_1(t,x))$$
is $C^2$-smooth and satisfies
\begin{eqnarray*}
&&\frac{\p \phi_1}{\p t} +v\cdot \nabla \phi_1=\phi_1 \Big<\big( \nabla v \big)\frac{\nabla \phi_1}{|\nabla \phi_1|}, \frac{\nabla \phi_1}{|\nabla \phi_1|}\Big>,\quad (t,x)\in  O_{\tilde{\ep}_1}(t_\ast),\\
&&\phi_1(0,x)=\phi^0(x),\quad x\in  \interface_{\tilde{\ep}_1}(0),\\
&&\{x\,|\,\phi_1(s,x)=0\}=\interface(s),\quad \forall\,s\in[0,t_\ast],\\
&&  \nabla \phi_1(s,x)=p(s;\varphi_1(s,x)),\quad  \forall\,(s,x)\in  O_{\tilde{\ep}_1}(t_\ast),\\
&&|\nabla \phi_1(s,x)|=|p(s;\varphi_1(s,x))|=|\nabla\phi^0(\varphi_1(s,x))| ,\quad \forall\, s\in[0,t_\ast],\,\,\, \forall\, x\in \interface(s).
\end{eqnarray*}
\indent The argument up to now has used the $C^2$-smoothness of $\phi^0$ on a neighborhood of $\interface(0)$ and the upper/lower bound of $|\nabla \phi^0|$ on $\interface(0)$, where we note that $|\nabla \phi_1(t_\ast,\cdot)|$ on $\interface(t_\ast)$ has exactly the same upper/lower bound as $|\nabla \phi^0|$ on $\interface(0)$.
Therefore,  {\it we may replace $\phi^0$ with $\phi_1(t_\ast,\cdot)$ to demonstrate the same kind of estimates in terms of the above constants $V_0,V_1,V_2,V_3,V_4$ and $t_\ast$ as well as $\ep_2 >0$ appropriately chosen in \eqref{ep2} for the characteristic ODEs \eqref{2chara}-\eqref{2chara3}  for $s\in [t_\ast,2t_\ast]$ with the initial condition
$$x(t_\ast)=\xi,\quad p(t_\ast)=\nabla\phi_1(t_\ast,\xi),\quad \Phi(t_\ast)= \phi_1(t_\ast,\xi),\quad \xi\in \{x(t_\ast;\tilde{\xi})\,|\,\tilde{\xi}\in\interface_{\tilde{\ep}_1}(0)\}.$$}
Furthermore, we find a constant $\tilde{\ep}_2\in(0,\ep_2]$ such that
\begin{eqnarray*}
&&O_{\tilde{\ep}_2}(2t_\ast) :=\bigcup_{t_\ast\le s\le 2t_\ast} \Big(\{s\}\times \{x(s;\xi)\,|\,\xi\in\interface_{\tilde{\ep}_2}(t_\ast)\}\Big),\\
&&\psi_2: [t_\ast,2t_\ast]\times \interface_{\tilde{\ep}_2}(t_\ast)\to  O_{\tilde{\ep}_2}(2t_\ast),\quad \psi_2(s,\xi):=(s, x(s;\xi)),\\
&&\psi_2(t,\cdot):  \interface_{\tilde{\ep}_2}(t_\ast)\to  O_{\tilde{\ep}_2}(2t_\ast)|_{s=t}=\{t\}\times \{x(t;\xi)\,|\,\xi\in\interface_{\tilde{\ep}_2}(t_\ast)\},\quad t_\ast\le t\le 2t_\ast,
\end{eqnarray*}
are $C^1$-diffeomorphic. Also, there exists $\ep_3>0$ such that
$$ \{x(2t_\ast;\xi)\,|\,\xi\in\interface_{\tilde{\ep}_2}(t_\ast)\}\supset \interface_{\ep_3}(2t_\ast).$$
Let $\varphi_2: O_{\tilde{\ep}_2}(2t_\ast)\to \interface_{\tilde{\ep}_2}(t_\ast)$ be defined as
$$\psi_2(t,\xi)=(t,x)\Leftrightarrow (t,\xi)=\psi_2^{-1}(t,x)=:(t,\varphi_2(t,x)),$$
where we note that $\varphi_2(t,\interface(t))=\interface(t_\ast)$.
 Then, the function
$$\phi_2: O_{\tilde{\ep}_2}(2t_\ast) \to\R,\quad \phi_2(t,x):=\Phi(t; \varphi_2(t,x))$$
is $C^2$-smooth and satisfies
\begin{eqnarray*}
&&\frac{\p \phi_2}{\p t} +v\cdot \nabla \phi_2=\phi_2 \Big< \big( \nabla v \big) \frac{\nabla \phi_2}{|\nabla \phi_2|}, \frac{\nabla \phi_2}{|\nabla \phi_2|}\Big>,\quad (t,x)\in  O_{\tilde{\ep}_2}(2t_\ast),\\
&&\phi_2(0,x)=\phi_1(t_\ast, x),\quad x\in  \interface_{\tilde{\ep}_2}(t_\ast),\\
&&\{x\,|\,\phi_2(s,x)=0\}=\interface(s),\quad \forall\,s\in[t_\ast,2t_\ast],\\
&&  \nabla \phi_2(s,x)=p(s;\varphi_2(s,x)),\quad  \forall\,(s,x)\in  O_{\tilde{\ep}_2}(2t_\ast),\\
&&|\nabla \phi_2(s,x)|=|p(s;\varphi_2(s,x))|=|\nabla\phi_1(t_\ast, \varphi_2(s,x))|\\
&&\qquad =|\nabla\phi^0(\varphi_1(t_\ast,\varphi_2(s,x)))| ,\quad \forall\, s\in[t_\ast,2t_\ast],\,\,\, \forall\, x\in \interface(s).
\end{eqnarray*}
\indent Note that $\phi_1$ and $\phi_2$ are smoothly connected at $t=t_\ast$. With the common constant $t_\ast$, we may repeat this process with $\ep_1,\tilde{\ep}_1,  \ep_2,\tilde{\ep}_2, ,\ep_3, \tilde{\ep}_3,  \cdots$ (note that  it is possible that $\ep_k,\tilde{\ep}_k,\to0$ as $k\to \infty$).  We conclude the proof with defining $\Theta:= \cup_{l\in\N}  O_{\tilde{\ep}_l}(lt_\ast)$.
\end{proof}

If we choose $\phi^0$ which coincides  with the (local) signed distance function of $\interface(0)$, then the solution $\phi$ obtained in Theorem \ref{Thm1} satisfies
$$|\nabla \phi(t,x)|\equiv1 \quad\forall\, t\ge0,\,\,\,\forall\,x\in\interface(t).$$
\subsection{Case 2: problem with level-set touching $\p\Omega$}\label{section:contact-line-case}

We consider the problem \eqref{problem1} with the level-set touching the boundary of $\Omega$.
Let $K\subset \R^3$ be a bounded connected open set such that $K\cap \Omega\neq\emptyset$, $\p K\cap \p\Omega\neq\emptyset$ and $\p K$ is a closed $C^2$-smooth surface. Define
$$\interface(0):=\p K \cap \bar{\Omega}.$$
Let $\phi^0$ be a $C^2$-smooth $\R$-valued function defined in an open set containing  $\bar{K}\cup\bar{\Omega}$ such that
$$\phi^0>0\mbox{ in $\bar{K}$},\quad \phi^0<0\mbox{ outside $ \bar{K}$},\quad \nabla \phi^0\neq0 \mbox{ on $\p K$}.$$
Then, we have
$$\phi^0>0\quad\mbox{in $K\cap\Omega$},\quad\{x\in\bar{\Omega}\,|\,\phi^0(x)=0\}=\interface(0),\quad \nabla \phi^0\neq0 \mbox{ on $\interface(0)$}.$$
Let $f$ be the solution of the original level-set equation \eqref{transport}.
Define
$$\interface(t):=\{x\in\bar{\Omega}\,|\, f(t,x)=0\}=X(t,0,\interface(0)),\quad \forall\,t\ge0,$$
where we note that $\p\Omega$ is invariant under the flow $X$ of \eqref{1ODE}, and hence $\interface(t)$ always touches $\p\Omega$.
If we follow the same argument as given in Subsection 2.1, we would face non-trivial issues at/near $\interface(t)\cap\p\Omega$ coming from the behavior of the variational equations of the characteristic ODEs on $\p\Omega$.  Hence, we modify the reasoning of Subsection 2.1 so that $\interface(t)\cap\p\Omega$ is not involved.

Let $\{\Omega^k\}_{k\in\N}$ be a monotone approximation of $\Omega$, i.e., each $\Omega^k$ is  an open subset of $\Omega$; $\Omega^k\subset \Omega^{k+1}$ for all $k\in\N$; for any $G\subset \Omega$ compact, there exists $k=k(G)$ such that $G\subset\Omega^k$. Introduce
$$\{ \interface^k(0) \}_{k\in\N}, \quad  \interface^k(0):=\interface(0)\cap\Omega^k,\quad \interface^k(t):=X(t,0,\interface^k(0)),$$
where for each $k\in\N$ we have $\ep>0$ depending on $k$ such that
$$\interface^k_{\ep}(0):=\bigcup_{x\in \interface^k(0)}\{ y\in\R^3\,|\, |y-x|<\ep \}\subset \Omega.$$
Now, we may follow the same argument as given in Subsection 2.1 with $\interface^k(0)$ in place of $\interface(0)$ to obtain the following objects:
\begin{eqnarray*}
&&O_{\tilde{\ep}_1(k)}(t_\ast(k)) =\bigcup_{0\le s\le t_\ast(k)} \Big(\{s\}\times \{x(s;\xi)\,|\,\xi\in\interface^k_{\tilde{\ep}_1(k)}(0)\}\Big),\\
&&O_{\tilde{\ep}_2(k)}(2t_\ast(k)) =\bigcup_{t_\ast(k)\le s\le 2t_\ast(k)} \Big(\{s\}\times \{x(s;\xi)\,|\,\xi\in\interface^k_{\tilde{\ep}_2(k)}(t_\ast(k))\}\Big),\ldots,\\
&&\Theta^k:=  \bigcup_{l\in\N}  O_{\tilde{\ep}_l(k)}(lt_\ast(k)),\\
&&\mbox{a unique $C^2$-solution $\phi^k$ of \eqref{problem1}}|_{\Theta=\Theta^k}.
\end{eqnarray*}
The method of characteristics implies that $\phi^k\equiv\phi^{k'}$ on $\Theta^k\cap\Theta^{k'}$ for every $k,k'\in\N$.
Therefore, setting
\begin{eqnarray}\label{tube22}
\Theta:=\bigcup_{k\in\N} \Theta^k,
\end{eqnarray}
 we obtain a unique $C^2$-solution $\phi$ of \eqref{problem1}. We remark that
 $$\Theta\cap(\{t\}\times\interface(t))=\{t\}\times(\interface(t)\setminus\p\Omega),\quad\forall\,t\ge0.$$
We summarize the result:
\begin{Thm}\label{Thm122}
There exists a  tubular neighborhood $\Theta$ in the sense of  \eqref{tube22} of the level-set $\{\interface(t)\}_{t\ge0}$ touching $\p\Omega$   for which \eqref{problem1} admits a unique $C^2$-solution $\phi$ satisfying
\begin{eqnarray*}
&&\dis \interface^\phi(t):= \{x\in\Omega\,|\,  \phi(t,x)=0\}=\interface(t)\setminus \p\Omega,\quad \forall\,t\in[0,\infty),\medskip\\
&&\dis\forall\, t\in[0, \infty),\,\,\,\forall\,x\in \interface(t)\setminus \p\Omega,\,\,\,\exists\, \xi\in \interface(0)\setminus \p\Omega\mbox{ such that } |\nabla \phi(t,x)|=|\nabla\phi^0(\xi)|.
\end{eqnarray*}
\end{Thm}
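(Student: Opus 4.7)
The plan is to bypass the difficulty at $\interface(t)\cap\p\Omega$ by interior exhaustion of $\Omega$ and reduce to Theorem \ref{Thm1} on each layer. Concretely, I would take an increasing sequence of open sets $\{\Omega^k\}_{k\in\N}$ with $\overline{\Omega^k}\subset\Omega^{k+1}$ and $\bigcup_k\Omega^k=\Omega$, and set $\interface^k(0):=\interface(0)\cap\Omega^k$, $\interface^k(t):=X(t,0,\interface^k(0))$. Compactness of $\overline{\Omega^k}$ in $\Omega$ gives, for each $k$, an $\ep_k>0$ with $\interface^k_{\ep_k}(0)\subset\Omega$ and a positive lower bound for $|\nabla\phi^0|$ on $\overline{\interface^k(0)}$; these are precisely the structural facts about $\interface(0)$ that the proof of Theorem \ref{Thm1} actually uses.

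For each fixed $k$ I would then rerun the characteristic construction of Theorem \ref{Thm1} verbatim with $\interface^k(0)$ in place of $\interface(0)$, producing $k$-dependent constants $V_0,\ldots,V_4,t_\ast$ and shrinking radii $\{\tilde\ep_l(k)\}_{l\in\N}$, and assembling a tubular neighborhood $\Theta^k=\bigcup_{l\in\N}O_{\tilde\ep_l(k)}(l\,t_\ast(k))$ of $\{\interface^k(t)\}_{t\ge0}$ carrying a unique $C^2$-solution $\phi^k$ of \eqref{problem1} with the gradient-preservation property. The one step that merits a moment of care is the injectivity-by-contradiction argument for $\xi\mapsto x(s;\xi)$, since $\interface^k(0)$ is only a relatively compact surface-with-boundary rather than closed; compactness of $\overline{\interface^k(0)}\subset\interface(0)$ supplies the subsequential limits, and injectivity of the flow of \eqref{1ODE} on all of $\interface(0)$ then produces the same contradiction as in the closed case.

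Consistency $\phi^k\equiv\phi^{k'}$ on $\Theta^k\cap\Theta^{k'}$ follows from uniqueness of \eqref{2chara}--\eqref{2chara3} with data $(\xi,\nabla\phi^0(\xi),\phi^0(\xi))$: both $\phi^k$ and $\phi^{k'}$ are $\Phi$ pulled back along the same configuration-space flow. Setting $\Theta:=\bigcup_k\Theta^k$ and $\phi:=\phi^k$ on $\Theta^k$ then gives a well-defined $C^2$-function on $\Theta$. For the zero-set identity, every $x\in\interface(t)\setminus\p\Omega$ has the form $X(t,0,\xi)$ with $\xi\in\interface(0)\setminus\p\Omega$ by the flow invariance of $\p\Omega$ from Lemma \ref{flow-invariance}, so $\xi\in\interface^k(0)$ for all sufficiently large $k$ and hence $(t,x)\in\Theta$; conversely $\Theta\subset[0,\infty)\times\Omega$ by construction, and the pointwise gradient identity is inherited from the individual $\phi^k$. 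The principal obstacle is not a new analytic estimate but the degeneration of constants: $t_\ast(k)$ and $\tilde\ep_l(k)$ will in general tend to $0$ as $k\to\infty$, so the tubes $\Theta^k$ shrink toward $\p\Omega$ — which is precisely why the statement records the tubular neighborhood in the weaker form \eqref{tube22}, as a countable union rather than a uniformly thick tube, and it is the reason this method alone cannot reach the contact set $\interface(t)\cap\p\Omega$.
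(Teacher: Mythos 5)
Your proposal is correct and takes essentially the same route as the paper: an interior exhaustion $\{\Omega^k\}$ of $\Omega$, applying the Theorem~\ref{Thm1} construction to each truncated surface $\interface^k(0)=\interface(0)\cap\Omega^k$, consistency via uniqueness of the characteristic ODEs, and $\Theta:=\bigcup_k\Theta^k$. The only thing you add beyond the paper's text is the explicit observation that the injectivity-by-contradiction step still closes because $\overline{\interface^k(0)}\subset\interface(0)\setminus\p\Omega$ is compact and the flow of \eqref{1ODE} is injective on all of $\interface(0)$ — a point the paper leaves implicit when it says to ``follow the same argument'' — and your closing remark about the necessary degeneration of $t_\ast(k),\tilde\ep_l(k)$ near $\p\Omega$ matches the paper's choice to record $\Theta$ only in the weakened form \eqref{tube22}.
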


Let us note in passing that another method to investigate a problem with the level-set touching $\p\Omega$ would run via smooth extension of $v$ outside $\Omega$. The following steps would suffice:
\begin{itemize}
\item[]{\bf Step~1.} Extend the velocity field $v$ to $\R\times\R^3$ as a $C^3$-function by means of Whitney's extension theorem \cite{W} or the extension operators in Sobolev spaces (see, e.g., Chapter 5 of \cite{Adams}) together with the Sobolev embedding theorem, where additional conditions on $v$ and $\Omega$ are required accordingly.
\item[] {\bf Step~2.}   Extend the flow $X(t,\tau,\xi)$ to $t,\tau\in\R$, $\xi\in\R^3$ and the  problem \eqref{problem1} to a tubular neighborhood $\tilde{\Theta}$ of the level-set $\{ X(t,0,\p K) \}_{t\ge0}$.
\item[]{\bf Step~3.} Solve the extended problem in the same way as Subsection 2.1, where each characteristic curve starting at a point of $\Omega$ stays inside $\Omega$ forever due to the flow  invariance of $\bar{\Omega}$ and $\p\Omega$ under $X$.
\end{itemize}
\noindent The restriction of the solution obtained in Step 3 to $\Theta:=\tilde{\Theta}\cap ([0,\infty)\times\bar{\Omega})$ then is the desired object.

\subsection{Simpler  nonlinear modification}
The nonlinear modification in \eqref{problem1} is designed to preserve $|\nabla \phi|$ along each characteristic curve on the level-set.
If we relax the requirement, i.e., we only ask for an a priori bound of $|\nabla \phi|$ on the level-set, we may use a much simpler modification.

We take the same configuration of the level-set considered in Subsection 2.1.
Let $\alpha>0$ be a constant such that
$$-\alpha|p^2|\le \langle\!(\nabla v(t,x))p,p\rangle\le \alpha|p|^2,\quad \forall\,p\in\R^3.$$
With a constant $\beta>\alpha$, we consider
\begin{eqnarray*}
\frac{\p\phi}{\p t}+v\cdot\nabla\phi=\phi(\beta-|\nabla\phi|),\quad \phi(0,\cdot)=\phi^0,
\end{eqnarray*}
which is seen as the Hamilton-Jacobi equation \eqref{HJ22} with
$$H(t,x,p,\Phi):=v\cdot p-\Phi(\beta-|p|).$$
The characteristic ODEs in this case are given as
\begin{eqnarray*}
x'(s)&=&\frac{\p H}{\p p}(s,x(s),p(s),\Phi(s))=v(s,x(s))+\Phi(s)\frac{p(s)}{|p(s)|},\\
p'(s)&=&-\frac{\p H}{\p x}(s,x(s),p(s),\Phi(s))-\frac{\p H}{\p\Phi}(s,x(s),p(s),\Phi(s))p(s)\\
&=&(\nabla v(s,x(s)))^{\sf T} p(s)-(|p(s)|-\beta)p(s),\\
\Phi'(s)&=&\frac{\p H}{\p p}(s,x(s),p(s),\Phi(s))\cdot p(s)-H(s,x(s),p(s),\Phi(s))=\Phi(s),\\
&&\!\!\!\!\!\!\!\!\!\!\!\!\!\!\!\! \!\!\!\!\!\!\!\!\!\!\!\! x(0)=\xi,\quad p(0)=\nabla\phi^0(\xi),\quad \Phi(0)=\phi^0(\xi)\quad \mbox{(except $\xi$ such that $p(0)=0$).}
\end{eqnarray*}
As long as the above characteristic ODEs have  solutions, $|p(s)|^2$ evolves as
$$\2\frac{\rm d}{{\rm d} s}|p(s)|^2=\langle\!(\nabla v(s,x(s)))p(s),p(s)\rangle-(|p(s)|-\beta)|p(s)|^2,$$
which leads to
\begin{eqnarray*}
\frac{\rm d}{{\rm d} s}|p(s)|^2\le -2(|p(s)|-\beta-\alpha)|p(s)|^2, \quad
\frac{\rm d}{{\rm d} s}|p(s)|^2\ge -2(|p(s)|-\beta+\alpha)|p(s)|^2.
\end{eqnarray*}
If $\beta-\alpha\le |\nabla \phi^0|\le \beta+\alpha$ on $\interface(0)$, we have for each $\xi\in\interface(0)$,
$$\beta-\alpha\le |p(s)|\le \beta+\alpha\quad\mbox{as long as $p(s)$ exists}.$$
In fact, suppose that there exists $\tau>0$ such that $|p(\tau)|>\beta+\alpha$;  set $s^\ast:=\sup\{ s\le \tau\,|\, |p(s)|=\beta+\alpha \}$; the continuity of $|p(\cdot)|$ implies that  $|p(s^\ast)|=\beta+\alpha$ and $|p(s)|> \beta+\alpha$ for all $s\in(s^\ast,\tau]$;  then, we necessarily have
$$|p(\tau)|^2\le |p(s^\ast)|^2-2\int^\tau_{s^\ast} (|p(s)|-\beta-\alpha)|p(s)|^2ds <|p(s^\ast)|^2,$$
which is a contradiction; a fully analogous argument yields the lower bound.
Therefore, we may apply the reasoning of Subsection 2.1 to the following problems: find a  tubular neighborhood $\Theta$ of $\{\Gamma(t)\}_{t\ge0}$ and a $C^2$-function $\phi$  satisfying
\begin{eqnarray}\label{problem2-2}
&&\left\{
\begin{array}{lll}
&&\dis \frac{\p \phi}{\p t} +v\cdot \nabla \phi=\phi (\beta -|\nabla\phi|)\mbox{\quad in $\Theta$},\medskip\\
&&\dis \phi(0,\cdot)=\phi^0\mbox{\quad on $\Theta|_{t=0}$}.
\end{array}
\right.
\end{eqnarray}
We obtain the following result.
\begin{Thm}\label{Thm1-1}
Suppose that  $\beta-\alpha\le |\nabla \phi^0|\le \beta+\alpha$ on $\interface(0)$. Then, there exists a  tubular neighborhood $\Theta$ of the level-set $\{\interface(t)\}_{t\ge0}$ for which \eqref{problem2-2} admits a  unique $C^2$-solution $\phi$ satisfying
\begin{eqnarray*}
&&\interface^\phi(t):= \{x\,|\,  \phi(t,x)=0\}=\interface(t),\quad \forall\,t\in[0,\infty),\medskip\\
&&\beta-\alpha \le |\nabla \phi(t,x)|\le \beta+\alpha,\quad \forall\, t\in[0, \infty),\,\,\,\forall\,x\in \interface(t).
\end{eqnarray*}
\end{Thm}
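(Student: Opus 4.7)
The plan is to mimic the structure of the proof of Theorem 2.1, replacing the Hamiltonian there by $H(t,x,p,\Phi)=v\cdot p-\Phi(\beta-|p|)$ and checking that each step of the characteristic/iteration argument still goes through with the required uniform bounds. The authors have already computed the characteristic system and, more importantly, have verified the key a priori control on the momentum variable: for any initial point $\xi\in\interface(0)$ on which $\beta-\alpha\le|\nabla\phi^0(\xi)|\le\beta+\alpha$ one has $\beta-\alpha\le|p(s;\xi)|\le\beta+\alpha$ as long as the characteristic exists. In particular $|p(s;\xi)|$ is bounded away from $0$, so the term $p/|p|$ appearing in the $x$-equation is smooth, and the Hamiltonian is $C^3$ in a neighborhood of the relevant part of phase space. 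This is the analogue of the estimates \eqref{1xsxs}--\eqref{4xsxs} in the proof of Theorem 2.1.

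First I would record the scalar ODE for $\Phi$ along characteristics: $\Phi'(s)=\Phi(s)$, so $\Phi(s;\xi)=e^s\phi^0(\xi)$. On the level-set $\phi^0(\xi)=0$ this gives $\Phi(s;\xi)\equiv 0$, which in turn reduces the $x$-equation on the level-set to $x'(s)=v(s,x(s))$; hence the characteristic projects onto the flow of \eqref{1ODE} and by (H2) exists globally in $\bar\Omega$. Moreover $\partial\Phi/\partial\xi(s;\xi)=e^s\nabla\phi^0(\xi)$, which is uniformly bounded on $[0,1]\times\interface(0)$ in terms of $\sup_{\interface(0)}|\nabla\phi^0|$.

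Next I would derive the variational equation for $\partial x/\partial\xi$ by differentiating $x'=v(s,x)+\Phi\,p/|p|$ in $\xi$. On the level-set (where $\Phi\equiv 0$) this simplifies to
\begin{eqnarray*}
\frac{\rm d}{{\rm d} s}\frac{\p x}{\p\xi_i}(s)=\nabla v(s,x(s))\frac{\p x}{\p\xi_i}(s)+\frac{\p\Phi}{\p\xi_i}(s)\,\frac{p(s)}{|p(s)|},\qquad \frac{\p x}{\p\xi_i}(0)=e^i.
\end{eqnarray*}
Using $|p(s)|\ge\beta-\alpha>0$, the bound on $\partial\Phi/\partial\xi$, and hypothesis (H3), Gronwall's inequality yields constants $\tilde V_3,\tilde V_4$ depending only on $v,\beta,\alpha$ and $\sup_{\interface(0)}|\nabla\phi^0|$ such that $|\partial x/\partial\xi_i(s;\xi)|\le\tilde V_3$ and $|\frac{\rm d}{{\rm d} s}\partial x/\partial\xi_i(s;\xi)|\le\tilde V_4$ for all $s\in[0,1]$ and $\xi\in\interface(0)$. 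This is the exact counterpart of \eqref{est21}--\eqref{est22}, so by continuity these bounds persist on $\interface_{\ep_1}(0)$ for some $\ep_1>0$.

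From here the remainder of the argument is word-for-word as in the proof of Theorem 2.1: choose $t_\ast\in(0,1]$ small enough that $1-\{3(2\tilde V_4 t_\ast)+6(2\tilde V_4 t_\ast)^2+6(2\tilde V_4 t_\ast)^3\}>0$; run the contradiction argument to obtain $\tilde\ep_1\in(0,\ep_1]$ for which $\xi\mapsto x(s;\xi)$ is injective on $\interface_{\tilde\ep_1}(0)$ for every $s\in[0,t_\ast]$, with positive Jacobian determinant; build the $C^2$-solution $\phi_1$ on $O_{\tilde\ep_1}(t_\ast)$ via the method of characteristics (Appendix 1); verify that $\phi_1=0$ exactly on $\interface(s)$ (since $\Phi\equiv 0$ only for $\phi^0(\xi)=0$, using $\Phi(s;\xi)=e^s\phi^0(\xi)$), and that the upper/lower bounds $\beta-\alpha\le|\nabla\phi_1(s,x)|=|p(s;\varphi_1(s,x))|\le\beta+\alpha$ hold on $\interface(s)$. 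Because these bounds on $|\nabla\phi_1(t_\ast,\cdot)|$ are the same as those on $|\nabla\phi^0|$, the constants $\tilde V_0,\ldots,\tilde V_4$ and hence $t_\ast$ can be reused to iterate the construction on $[t_\ast,2t_\ast],[2t_\ast,3t_\ast],\ldots$, yielding $\Theta:=\bigcup_{l\in\N}O_{\tilde\ep_l}(lt_\ast)$ and a $C^2$-solution $\phi$ on $\Theta$ with the stated properties. Uniqueness follows from the general uniqueness in the method of characteristics on each slab.

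The main obstacle is confirming that the a priori momentum bound, which is the only real new ingredient, is actually usable in the variational step: one needs the non-vanishing of $|p(s)|$ (guaranteed by $\beta>\alpha$) to keep $p/|p|$ smooth, and one needs $\partial\Phi/\partial\xi$ to stay bounded on a fixed interval so that the Gronwall argument for $\partial x/\partial\xi$ produces $s$-independent constants; both are taken care of by the explicit expressions above. Once that is in place, the iteration-with-shrinking-$\tilde\ep_k$ machinery of Theorem 2.1 transfers verbatim.
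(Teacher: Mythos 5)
Your proposal is correct and follows essentially the same approach as the paper: the paper's own proof of Theorem~\ref{Thm1-1} consists precisely of establishing the a priori bound $\beta-\alpha\le|p(s)|\le\beta+\alpha$ on the level-set and then invoking the iterative characteristics construction of Subsection~2.1 verbatim, with the constancy of $|p(s)|$ replaced by the two-sided bound being the quantity that is reproduced at time $t_\ast$ and allows the iteration with a fixed $t_\ast$. You correctly identify the two ingredients that make the transfer work (the lower bound $|p|\ge\beta-\alpha>0$ keeping $p/|p|$ smooth, and the explicit formula $\Phi(s;\xi)=e^s\phi^0(\xi)$ giving a uniform bound on $\partial\Phi/\partial\xi$ over $[0,1]$ so Gronwall produces $t$-independent constants), which is exactly the content the paper leaves implicit.
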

Note that Theorem~\ref{Thm1-1} is interesting for numerical purposes because it is usually not important to exactly keep $|\nabla \phi| \equiv 1$, but rather to stay away from extreme values of the gradient norm. Therefore, the problem \eqref{problem2-2} should be investigated in more detail from the numerical perspective in the future.
%
%
%
\setcounter{section}{2}
\setcounter{equation}{0}
\section{Viscosity solution on the whole domain}\label{section:viscosity-solution}

Let $\Omega\subset\R^3$ be a bounded connected open set.
Let $v=v(t,x)$ be a given function belonging to $C^0([0,\infty)\times\bar{\Omega};\R^3)\cap C^1([0,\infty)\times\Omega;\R^3)$ and satisfying   (H2)--(H3) (we do not need $C^3$-regularity in $x$).
Let $\interface(0)\subset \Omega$ be a closed $2$-dimensional surface (topological manifold) and let $\phi^0:\bar{\Omega}\to\R$ be a $C^0$-function such that $\{x\in\Omega\,|\,\phi^0(x)=0\}=\interface(0)$, where $f(t,x):=\phi^0(X(0,t,x))$ is not necessarily a classical solution of the original linear transport equation.
We keep the notation and configuration  in \eqref{1interior} and \eqref{1interface} with the current $X$ and $\phi^0$, where we repeat
$$\interface(t):=\{x\in\Omega\,|\, f(t,x)=0\}=X(t,0,\interface(0)),\quad \interface(t)\cap\p\Omega=\emptyset,\quad \forall\,t\ge0.$$
%
\indent If we deal with the Hamilton-Jacobi equation in  \eqref{problem1} on the whole space $\Omega$, the nonlinear term would suffer from singularity, i.e., $\langle(\nabla v) p , p \rangle |p|^{-2} $ cannot be continuous at $p=0$. Hence, also for a simpler structure near $\p\Omega$, we introduce a smooth cut-off so that the nonlinearity is smooth and effective only around the level-set $\{\interface(t)\}_{t\ge0}$, where we note carefully that  the level-set $\{\interface(t)\}_{t\ge0}$ is not the one determined by the upcoming viscosity solution, i.e., such cut-off can be given  independently from the unknown function.  We also note that $\nabla v$  is not required on $\p\Omega$ due to the cut-off.  We explain two ways to introduce a suitable cut-off.

The first way is simpler but available only for the problem within each finite time interval.
Fix an arbitrary terminal time $T>0$.
For $\ep>0$, let $K_{\ep} \subset \bar{\Omega}$ denote the intersection of $\bar{\Omega}$ and the $\ep$-neighborhood of $\p\Omega$.  Then, since $\interface(t)$ never touches $\p\Omega$ within $[0,T]$,  we find $\ep>0$ such that
$$K_{3\ep} \cap \interface(t)=\emptyset\quad(\mbox{or, equivalently, } K_{3\ep}\subset \Omega^-(t) ),\quad \forall\, t \in[0,T].$$
Define the  continuous function
$R_T:[0,T]\times\bar{\Omega}\times\R^3\to\R$  as
\begin{eqnarray*}
&&R_T(t,x,p) :=\eta_0(x)\eta_2(|p|)\Big<\big( \nabla v(t,x)\big) \frac{p}{|p|},\frac{p}{|p|}\Big> , \\
&&\eta_0(x):=\left\{
\begin{array}{cll}\medskip
1&&\mbox{ for $x\not\in K_{3\ep}$}, \\\nonumber \medskip
0&&\mbox{ for  $x\in K_{2\ep}$},
\\\nonumber \medskip
\mbox{non-negative smooth transition from $1$ to $0$}&&\mbox{ otherwise},
\end{array}
\right.\\
&&\eta_2(r):=\left\{
\begin{array}{cll}\medskip
1&&\mbox{ for $\dis  \frac{2}{3}\le r\le \frac{4}{3} $},
\\\nonumber\medskip
0&&\mbox{ for $\dis r\le \frac{1}{3}$ or $\dis \frac{5}{3}\le r$}, \\\medskip
\mbox{monotone smooth transition from $1$ to $0$}&&\mbox{ otherwise}.
\end{array}
\right.
\end{eqnarray*}
This choice of cut-off is uncomplicated since we do not need detailed information on asymptotics of dist$(\interface(t),\p\Omega)$ as $t\to\infty$; however, $R_T$ depends on the terminal time and a time global analysis is impossible. Note that the above specific choice of $1/3,2/3$, etc., in $\eta_2$ is not essential.

The second way is to allow $t$-dependency for cut-off.
 Since $\interface(t)$ never touches $\p\Omega$ within $[0,\infty)$,  we find a smooth function $\ep:[0,\infty)\to\R_{>0}$ (possibly  $\ep(t)\to0$ as $t\to\infty$) such that
$$K_{3\ep(t)} \cap \interface(t)=\emptyset\quad(\mbox{or, equivalently, } K_{3\ep(t)}\subset \Omega^-(t) ),\quad \forall\, t \in[0,\infty).$$
Then, we take a smooth function $\eta_1:[0,\infty)\times\bar{\Omega}\to[0,1]$  such that
\begin{eqnarray*}
&&\eta_1(t,x)=\left\{
\begin{array}{cll}\medskip
1&&\mbox{ for $x\not\in K_{3\ep(t)}$}, \\\nonumber \medskip
0&&\mbox{ for  $x\in K_{2\ep(t)}$},
\\\nonumber \medskip
\mbox{non-negative smooth transition from $1$ to $0$}&&\mbox{ otherwise},
\end{array}
\right.
\end{eqnarray*}
where we omit an explicit formula of such $\eta_1$,  and  define the  continuous function $R$ as
\begin{eqnarray}\label{RRRR}
&&R(t,x,p) :=\eta_1(t,x)\eta_2(|p|)\Big<\big( \nabla v(t,x)\big) \frac{p}{|p|},\frac{p}{|p|}\Big>,
\end{eqnarray}
where $\eta_2$ is the one in $R_T$.

Due to (H3), there exists a constant $V_0>0$ such that
\begin{eqnarray}\label{bbb}
\sup|R_T|\le V_0,\quad \sup|R|\le V_0.
\end{eqnarray}
\indent In the rest of the paper, we take $R$ given as in \eqref{RRRR}. Note that all upcoming results hold also for $R_T$ as long as the terminal time $T$ is unchanged. For an arbitrary $T>0$, we discuss existence of a unique viscosity solution $\phi$ of
\begin{eqnarray}\label{HJ}
&& \left\{
\begin{array}{rll}
\dis\!\!\! \frac{\p \phi}{\p t}(t,x)+v(t,x)\cdot\nabla \phi (t,x)\!\!\!&=&\!\!\! \phi (t,x)R(t,x,\nabla \phi(t,x)) \mbox{ in $(0,T)\times\Omega$},\\
\dis \phi(0,x)\!\!\!&=&\!\!\!\phi^0(x) \mbox{ on $\Omega$},\\[0.5ex]
\dis \phi(t,x)\!\!\!&=&\!\!\!\phi^0(X(0,t,x)) \mbox{ on $[0,T]\times\p\Omega$}.
\end{array}
\right.
\end{eqnarray}
Before going on with \eqref{HJ},  we recall the definition of viscosity (sub/super)solutions of a general first order Hamilton-Jacobi equation of the form
\begin{eqnarray}\label{HJ2}
G(z,u(z),\nabla_z u(z))=0 \mbox{\quad in $O$},
\end{eqnarray}
where $O\subset\R^N$ is an open set, $G=G(z,u,q):O\times\R\times\R^N\to\R$ is a given continuous function and $u:O\to\R$ is the unknown function. Our evolutional  Hamilton-Jacobi equation is also seen in this form with $z=(t,x)$.  In the literature, \eqref{HJ2} is often treated as a typical example of degenerate second order PDEs (i.e., the second order term is completely degenerate to be $0$).
To state the definition, we introduce the upper semicontinuous envelope $u^\ast:O\to\R$ and the lower semicontinuous envelope $u_\ast:O\to\R$ of a locally bounded function $u:O\to\R$ as
\begin{eqnarray*}
&&u^\ast(z):=\lim_{r\to0}\,\sup\{ u(y)\,|\,y\in O,\,\,\,0\le|y-z|\le r\},\\
&&u_\ast(z):=\lim_{r\to0}\,\inf\{ u(y)\,|\,y\in O,\,\,\,0\le|y-z|\le r\}.
\end{eqnarray*}
Note that $u^\ast$ is upper semicontinuous and $u_\ast$ is lower semicontinuous;  if $u$ is upper semicontinuous (resp. lower semicontinuous), we have $u=u^\ast$ (resp. $u=u_\ast$).
 \medskip

\noindent {\bf Definition.} {\it  A function $u:O\to\R$ is a viscosity subsolution (resp. supersolution) of \eqref{HJ2}, provided
\begin{itemize}
\item $u^\ast$ is bounded from the above (resp. $u_\ast$ is bounded from the below);
\item If $(\varphi,z)\in C^1(O;\R)\times O$ satisfies
$$\max_{y\in O} (u^\ast(y)-\varphi(y))=u^\ast(z)-\varphi(z)
\quad(\mbox{resp. }\min_{y\in O} (u_\ast(y)-\varphi(y))=u_\ast(z)-\varphi(z))
,$$
 we have
 $$G(z,u^\ast(z),\nabla_z \varphi(z))\le0
 \quad(\mbox{resp. } G(z,u_\ast(z),\nabla_z \varphi(z))\ge0)
 .$$
\end{itemize}
A function $u:O\to\R$ is a viscosity solution of \eqref{HJ2}, if it is both a  viscosity subsolution and supersolution of  \eqref{HJ2}.
}
\medskip

\noindent It is well-known that $\max,\min$ in the definition can be replaced by the local maximum, local minimum, respectively.

\subsection{Existence of viscosity solution}

We state the main result of this subsection.
\begin{Thm}\label{Thm2}
Let $T>0$ be arbitrary.
There exists a viscosity solution $\phi\in C^0([0,T)\times\bar{\Omega};\R)$ of \eqref{HJ}, i.e., $\phi$ satisfies the first equation in \eqref{HJ} in the sense of the above definition and the initial/boundary condition strictly. Such a viscosity solution is unique.
Furthermore, it holds that
$$\interface^\phi(t):=\{x\in\Omega \,|\,\phi(t,x)=0 \}=\interface(t),\quad \forall\,t\in[0,T).$$
\end{Thm}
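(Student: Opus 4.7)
}
The argument splits into three stages: a comparison principle, existence via Perron's method, and identification of the zero level-set by sandwiching between explicit barriers.

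\emph{Stage 1 (Comparison principle).} The Hamiltonian $H(t,x,p,u):=v(t,x)\cdot p-u\,R(t,x,p)$ is continuous, Lipschitz in $u$ with constant $V_0$ by \eqref{bbb}, and Lipschitz in $(x,p)$ on bounded sets thanks to the smoothness of $v$ and the cut-off structure of $R$; moreover the cut-offs $\eta_1(t,x)$, $\eta_2(|p|)$ remove the singularity at $p=0$ and the dependence on $\nabla v$ near $\p\Omega$, so $H$ satisfies the standard Crandall--Ishii--Lions structural conditions on $[0,T)\times\bar\Omega\times\R^3\times\R$. A standard doubling-of-variables argument with a penalty of the form $\eta/(T-t)$ and Gronwall handling of the Lipschitz $u$-dependence yields: any bounded upper-semicontinuous subsolution $u$ and lower-semicontinuous supersolution $U$ satisfying $u\le U$ on the parabolic boundary $\{0\}\times\bar\Omega\cup[0,T)\times\p\Omega$ satisfy $u\le U$ on $[0,T)\times\bar\Omega$. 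Uniqueness of viscosity solutions of \eqref{HJ} is then immediate.

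\emph{Stage 2 (Existence via Perron).} Let $M:=\sup_{\bar\Omega}|\phi^0|$ and set $\Phi^{\pm}(t,x):=\pm M e^{V_0 t}$. Using $|R|\le V_0$ one checks that $\Phi^{\pm}$ are classical super- and subsolutions of the PDE:
\begin{align*}
\p_t\Phi^{+}+v\cdot\nabla\Phi^{+}-\Phi^{+}R \;=\; Me^{V_0 t}(V_0-R)\;\ge\;0,
\end{align*}
and analogously for $\Phi^{-}$. They sandwich the data: $\Phi^{-}(0,\cdot)\le\phi^0\le\Phi^{+}(0,\cdot)$, and on $[0,T)\times\p\Omega$ the boundary value $g(t,x):=\phi^0(X(0,t,x))$ (continuous by (H1)-(H3) and flow invariance of $\p\Omega$) satisfies $\Phi^{-}\le g\le\Phi^{+}$. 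Perron's method then produces a viscosity solution $\phi$ with $\Phi^{-}\le\phi\le\Phi^{+}$. Continuity up to the parabolic boundary and strict attainment of the data are obtained by constructing standard local barriers at each boundary point, using the continuity of $g$ and the bounded/Lipschitz structure of $H$.

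\emph{Stage 3 (Level-set identity).} With $f(t,x):=\phi^0(X(0,t,x))$ and $f^{\pm}:=\max(\pm f,0)$, introduce
\begin{align*}
\underline{u}(t,x) &:= f^{+}(t,x)\,e^{-V_0 t} - f^{-}(t,x)\,e^{V_0 t},\\
\overline{u}(t,x) &:= f^{+}(t,x)\,e^{V_0 t} - f^{-}(t,x)\,e^{-V_0 t}.
\end{align*}
In each of the open regions $\{f>0\}$ and $\{f<0\}$, direct computation shows that $\underline{u}$ is a viscosity subsolution and $\overline{u}$ a viscosity supersolution of the PDE (the sign of the prefactor $f$ cancels with the sign of $V_0\mp R$). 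Both agree with $\phi^0$ at $t=0$; on $[0,T)\times\p\Omega$, where $g$ has constant sign on each connected component (since $\interface(t)\cap\p\Omega=\emptyset$), the factors $e^{\mp V_0 t}$ give $\underline{u}\le g\le\overline{u}$. Once the viscosity sub/supersolution property of $\underline{u},\overline{u}$ is established globally (i.e., across the interface $\{f=0\}$), Stage~1 applied on $[0,T)\times\bar\Omega$ yields $\underline{u}\le\phi\le\overline{u}$, and hence $\phi$ has the same sign as $f$ at every $(t,x)$; in particular $\{\phi=0\}=\{f=0\}=\interface(t)$.

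\emph{Main obstacle.} The delicate point is verifying the global viscosity property of $\underline{u},\overline{u}$ across the interface $\{f=0\}$, since these functions are built piecewise and $f$ is only continuous. The idea is, for any $C^1$ test function $\varphi$ touching $\underline{u}$ from above at an interface point $(t_0,x_0)$ (where $\underline{u}=\varphi=0$), to convert $\varphi$ to a test function for $f$ by the scalings $\tilde\varphi=e^{V_0 t}\varphi$ in $\{f>0\}$ and $\tilde\varphi=e^{-V_0 t}\varphi$ in $\{f<0\}$, exploit the viscosity solution property of $f$ for the linear transport equation $\p_t f+v\cdot\nabla f=0$, and observe that at $(t_0,x_0)$ the lower-order term $\varphi R$ vanishes thanks to $\varphi(t_0,x_0)=0$, so that the transformed inequality reduces precisely to the required $\p_t\varphi+v\cdot\nabla\varphi-\varphi R\le 0$; a mild perturbation of $\varphi$ by $\epsilon\,|(t,x)-(t_0,x_0)|^2$ may be needed to ensure the touching configuration extends across the interface.
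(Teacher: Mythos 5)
Your overall strategy (explicit sub/supersolutions built from $f(t,x)=\phi^0(X(0,t,x))$ by multiplying with exponential weights, Perron's method, and the comparison principle) is essentially the same as the paper's. Indeed, your barriers $\underline{u},\overline{u}$ agree with the paper's $\rho,\tilde{\rho}$ away from $\p\Omega$. There are, however, two genuine gaps.

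\emph{Boundary attainment.} Your barriers do not force the Perron solution to attain the boundary data $g(t,x)=\phi^0(X(0,t,x))$ on $[0,T)\times\p\Omega$. On $\p\Omega$ (which lies in $\overline{\Omega^-(t)}$) you have $\underline{u}=ge^{V_0t}$ and $\overline{u}=ge^{-V_0t}$, so $\underline{u}<g<\overline{u}$ strictly for $t>0$, and Perron gives only $\underline{u}\le\phi\le\overline{u}$, not $\phi=g$. The constant barriers $\Phi^\pm$ of your Stage~2 are even farther from $g$, and the appeal to ``standard local barriers at each boundary point'' is not innocuous here because the boundary is characteristic: $v$ is tangent to $\p\Omega$, so the usual barrier construction for non-characteristic Dirichlet data does not apply directly. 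The paper resolves this cleanly by replacing the constant exponent $\pm V_0$ with a function $S(t,x)$ that vanishes identically in a neighborhood of $\p\Omega$; since $\p\Omega$ is flow-invariant and $\interface(t)$ stays away from $\p\Omega$, one can arrange $\rho=\tilde{\rho}=g$ on a full collar $[0,T)\times K_{\ep_2}$, which then forces $\phi=g$ there. This is the missing idea you need; your Stage~3 comparison argument also silently uses $\phi=g$ on $\p\Omega$, which is exactly what is unresolved.

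\emph{The interface case.} You correctly identify the verification of the sub/supersolution property for $\underline{u},\overline{u}$ across $\{f=0\}$ as the delicate step, but your proposed fix — converting $\varphi$ to a test function for $f$ by the piecewise scalings $e^{\pm V_0t}\varphi$ on $\{f\gtrless0\}$ — does not produce a $C^1$ function: if $\varphi(t_0,x_0)=0$ but $D\varphi(t_0,x_0)\ne0$, the two pieces have gradients differing by the factor $e^{2V_0t_0}$, so the glued $\tilde\varphi$ has a corner precisely at the interface. The $\ep|(t,x)-(t_0,x_0)|^2$ perturbation does not repair this. The correct and much simpler argument (used in the paper) is to evaluate $\varphi$ along the characteristic $\gamma(s)=(s,X(s,t_0,x_0))$: since the interface is invariant under the flow, $\underline{u}(\gamma(s))\equiv0$, hence $\varphi(\gamma(s))\ge\underline{u}(\gamma(s))=0$ with equality at $s=t_0$, so $s\mapsto\varphi(\gamma(s))$ has a local minimum at the interior point $t_0$ and $\p_t\varphi(t_0,x_0)+v(t_0,x_0)\cdot\nabla\varphi(t_0,x_0)=0\le\underline{u}(t_0,x_0)R(\dots)=0$. (The paper phrases this via the superdifferential $D^+\rho$ and a one-sided limit $s\to t_0^-$, which also handles $t_0$ near the boundary of the time interval.) No invocation of $f$ as a viscosity solution of the transport equation is needed for this step; what is used is only that $f$ vanishes along the characteristic through an interface point.
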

\medskip

\noindent{\bf Remark.} {\it Theorem \ref{Thm2} implies  the existence of a unique viscosity solution $\phi\in C^0([0,\infty)\times\bar{\Omega};\R)$ of \eqref{HJ} with $\interface^\phi(t)=\interface(t)$ for all $t\in[0,\infty)$. }
\medskip

\begin{proof}[{\it Proof of Theorem \ref{Thm2}.}]

Introduce the function $S:[0,T]\times\bar{\Omega}\to\R$ as
\begin{eqnarray*}
&&S(t,x):=\left\{
\begin{array}{cll}
&-V_0\mbox{\qquad for $ x\in\overline{\Omega^+(t)}$, $t\in[0,T]$},\\
&V_0\mbox{\,\,\,\,\,\qquad for $ x\in\Omega^-(t)\setminus K_{\varepsilon(T)}$, $t\in[0,T]$},\\
&0\mbox{\qquad \,\,\quad for $ x\in K_{\varepsilon(T)/2}$, $t\in[0,T]$},\\
&\mbox{non-negative smooth transition from $V_0$ to $0$}\mbox{\, otherwise},
\end{array}
\right.
\end{eqnarray*}
where $\ep(t)\ge \ep (T)$ for all $t\in[0,T]$ and  $S$ is  such that $S\equiv0$ near $\p\Omega$ for all $t\in[0,T]$. Note that $S$ is smooth except on $\cup_{0\le t\le T}(\{t\}\times\interface(t))$.

{\bf Step 1: viscosity subsolution/supersolution.} We claim that the function $\rho:[0,T)\times\bar{\Omega}\to\R$ defined as
$$\rho(t,x):=\phi^0(X(0,t,x))e^{\int_0^t S(s,X(s,t,x))ds}\quad (\mbox{$X$ is the flow of \eqref{1ODE}})$$
is a viscosity subsolution of \eqref{HJ} satisfying the initial/boundary condition strictly.
For this purpose, we mollify $\phi^0$ by the standard Friedrichs' mollifier in $\R^3$ with the parameter $\delta>0$, the result being denoted by $\phi_\delta^0$.
We consider the function
$$\rho_\delta(t,x):=\phi^0_\delta(X(0,t,x))e^{\int_0^t S(s,X(s,t,x))ds},$$
where $\rho_\delta\to\rho$ locally uniformly on $[0,T)\times\Omega$ as $\delta\to0+$.
For each point $(t,x)\in(0,T)\times \Omega\setminus \cup_{0\le s<T}(\{s\}\times\interface(s))$, there exists a sufficiently small neighborhood of $(t,x)$ on which  $\rho_\delta$ is $C^1$-smooth and satisfies
\begin{eqnarray*}
 \frac{\p \rho_\delta}{\p t}(t,x)+v(t,x)\cdot\nabla \rho_\delta (t,x)&=& \rho_\delta(t,x) S(t,x)
\end{eqnarray*}
for all sufficiently small $\delta>0$.

{\bf  Case 1:} fix any $(t,x)\in (0,T)\times\Omega \setminus \cup_{0\le s<T}(\{s\}\times\interface(s))$ such that $x\in \Omega^+(t)$.
Then, $\rho(t,x)>0$, and hence $\rho_\delta(t,x)>0$ for all sufficiently small $\delta>0$.
Let $\varphi$ be any test function satisfying the condition in the definition of  viscosity subsolution at $(t,x)$, i.e., $\rho-\varphi$ attains the maximum at $(t,x)$ within the closed $r$-ball $\overline{B_r(t,x)}$ with the center $(t,x)$ and some $r>0$.
Let $(t_\delta,x_\delta)$ be the maximum point of $\rho_\delta-\varphi$ within  $\overline{B_r(t,x)}$, where $(t_\delta,x_\delta)\to (t,x)$ as $\delta\to0+$.
Since $\rho_\delta$ is $C^1$-smooth near $(t_\delta,x_\delta)$,  we see that $\varphi (s,y)+(\rho_\delta(t_\delta,x_\delta)-\varphi(t_\delta,x_\delta))$  is tangent to $\rho_\delta$ at $(s,y)=(t_\delta,x_\delta)$ from the above. This implies that for all sufficiently small $\delta>0$,
\begin{eqnarray*}
&&\frac{\p \varphi}{\p t}(t_\delta,x_\delta)+v(t_\delta,x_\delta)\cdot\nabla \varphi(t_\delta,x_\delta) = \frac{\p \rho_\delta}{\p t}(t_\delta,x_\delta)+v(t_\delta,x_\delta)\cdot\nabla \rho_\delta (t_\delta,x_\delta)\\
&&\quad = \rho_\delta(t_\delta,x_\delta)S(t_\delta,x_\delta) =-\rho_\delta(t_\delta,x_\delta)V_0
\le \rho_\delta(t_\delta,x_\delta)R(t_\delta,x_\delta,\nabla \varphi(t_\delta,x_\delta)).
\end{eqnarray*}
Sending $\delta\to0+$, we obtain
\begin{eqnarray*}
&&\frac{\p \varphi}{\p t}(t,x)+v(t,x)\cdot\nabla \varphi(t,x) = \frac{\p \rho}{\p t}(t,x)+v(t,x)\cdot\nabla \rho(t,x) \\
&&\quad = \rho(t,x)S(t,x) =-\rho(t,x)V_0
\le \rho(t,x)R(t,x,\nabla \varphi(t,x)).
\end{eqnarray*}
\indent  {\bf Case 2:} fix any $(t,x)\in (0,T)\times\Omega \setminus \cup_{0\le s<T}(\{s\}\times\interface(s))$ such that $x\in \Omega^-(t)$.  Then, $\rho(t,x)<0$, and hence $\rho_\delta(t,x)<0$ for all sufficiently small $\delta>0$.
 Let  $\varphi$ be any test function satisfying the condition in the definition of  viscosity subsolution at $(t,x)$, i.e., $\rho-\varphi$ attains the maximum at $(t,x)$ within the ball $\overline{B_r(t,x)}$ with some $r>0$.
Let $(t_\delta,x_\delta)$ be the maximum point of $\rho_\delta-\varphi$ within  $\overline{B_r(t,x)}$, where $(t_\delta,x_\delta)\to (t,x)$ as $\delta\to0+$.
Since $\rho_\delta$ is $C^1$-smooth near $(t_\delta,x_\delta)$,  we see that $\varphi (s,y)+(\rho_\delta(t_\delta,x_\delta)-\varphi(t_\delta,x_\delta))$  is tangent to $\rho_\delta$ at $(s,y)=(t_\delta,x_\delta)$ from the above. This implies that for all sufficiently small $\delta>0$,
\begin{eqnarray*}
&&\frac{\p \varphi}{\p t}(t_\delta,x_\delta)+v(t_\delta,x_\delta)\cdot\nabla \varphi(t_\delta,x_\delta) = \frac{\p \rho_\delta}{\p t}(t_\delta,x_\delta)+v(t_\delta,x_\delta)\cdot\nabla \rho_\delta (t_\delta,x_\delta)\\
&&\quad = \rho_\delta(t_\delta,x_\delta)S(t_\delta,x_\delta) =\rho_\delta(t_\delta,x_\delta)V_0\le \rho_\delta(t_\delta,x_\delta)R(t_\delta,x_\delta,\nabla \varphi(t_\delta,x_\delta)).
\end{eqnarray*}
Sending $\delta\to0+$, we obtain
\begin{eqnarray*}
&& \frac{\p \varphi}{\p t}(t,x)+v(t,x)\cdot\nabla \varphi(t,x) = \frac{\p \rho}{\p t}(t,x)+v(t,x)\cdot\nabla \rho(t,x) \\
&&\quad =\rho(t,x)S(t,x)
\le \rho(t,x)R(t,x,\rho(t,x),\nabla \varphi(t,x)).
\end{eqnarray*}

\indent {\bf Case 3:} fix any $(t,x)\in \cup_{0< s<T}(\{s\}\times\interface(s))$.
Then, $\rho(t,x)=0$.
Let  $\varphi$ be any test function satisfying the condition in the definition of   viscosity subsolution at $(t,x)$. We  see that
\begin{eqnarray}\label{superdiff}
&&\limsup_{(s,y)\to(t,x)}\frac{\rho(s,y)-\rho(t,x)-(\varphi(s,y)-\varphi(t,x))}{|(s,y)-(t,x)|}\\\nonumber
&&=\limsup_{(s,y)\to(t,x)}\frac{\rho(s,y)-\rho(t,x)-D\varphi(t,x)\cdot((s,y)-(t,x))}{|(s,y)-(t,x)|}\le 0,
\end{eqnarray}
 i.e., $D\varphi(t,x)=(\frac{\p\varphi}{\p t}(t,x),\nabla\varphi(t,x))\in D^+\rho(t,x)$, where $D^+\rho(t,x)$ stands for the superdifferential\footnote{
 $\dis D^+\rho(z):=\Big\{a\in\R^4\,\Big|\, \limsup_{\tilde{z}\to z}\frac{\rho(\tilde{z})-\rho(z)-  a\cdot(\tilde{z}-z) }{|\tilde{z}-z|}\le0\Big\}$ with $z=(t,x)$.} of $\rho$ at $(t,x)$.
 Since $\rho(s,X(s,t,x))=\rho(t,x)= 0$ for all $0< s\le t$ and
 $$\lim_{s\to t-0}\frac{X(s,t,x)-x}{s-t}=\lim_{s\to t-0}\frac{X(s,t,x)-X(t,t,x)}{s-t}=v(t,x),$$
 \eqref{superdiff} with $y=X(s,t,x)$ and $s\to t-0$ implies that
 \begin{eqnarray*}
 D\varphi(t,x)\cdot (1,v(t,x))\le0,
 \end{eqnarray*}
 from which we obtain
 $$ \frac{\p \varphi}{\p t}(t,x)+v(t,x)\cdot\nabla \varphi(t,x)\le 0=\rho(t,x)R(t,x,\nabla \varphi(t,x)).$$
\indent Therefore, we conclude  that $\rho$ is a viscosity subsolution of \eqref{HJ}.
A similar reasoning with
$$\tilde{\rho}_\delta(t,x):=\phi_\delta^0(X(0,t,x))e^{\int_0^t -S(s,X(s,t,x))ds}$$
shows that the function $\tilde{\rho}:[0,T)\times\Omega\to\R$ defined as
$$\tilde{\rho}(t,x):=\phi^0(X(0,t,x))e^{\int_0^t -S(s,X(s,t,x))ds}$$
is a viscosity supersolution of \eqref{HJ} satisfying the initial/boundary condition strictly, where we look at the subdifferential\footnote{
$\dis D^-\tilde{\rho}(z):=\Big\{a\in\R^4\,\Big|\, \liminf_{\tilde{z}\to z}\frac{\tilde{\rho}(\tilde{z})-\tilde{\rho}(z)-  a\cdot(\tilde{z}-z) }{|\tilde{z}-z|}\ge0\Big\}=-D^+(-\tilde{\rho})(z)$ with $z=(t,x)$.} of $\tilde{\rho}$ in Case 3.

{\bf Step 2: application of Perron's method and comparison principle.}
Recall that $\p\Omega$ is invariant under $X$ and that  $S$ is defined to satisfy $S\equiv0$ near $\p\Omega$. Hence, there exist $\ep_1,\ep_2$ with  $0<\ep_2\ll\ep_1\ll\ep$ such that  $S\equiv0$ on $[0,T)\times K_{\ep_1}$ and $X(s,t,x) \in K_{\ep_1}$ for all $s\in[0,t]$ and all $(t,x)\in [0,T)\times K_{\ep_2}$.
Therefore, the viscosity subsolution and supersolution $\rho ,\tilde{\rho}\in C^0([0,T)\times\bar{\Omega})$ are  such that
\begin{eqnarray*}
&&\rho\le\tilde{\rho}\mbox{ in $[0,T)\times\bar{\Omega}$},\quad \rho=\tilde{\rho}=\phi^0(X(0,t,x))\mbox{ in $[0,T)\times K_{\ep_2}$},\\
&&\rho(0,x)=\tilde{\rho}(0,x)=\phi^0(x) \mbox{ on }\bar{\Omega},\quad  \rho=\tilde{\rho}=0\mbox{ on }\bigcup_{0\le s<T}\Big(\{s\}\times\interface(s)\Big).
\end{eqnarray*}
Applying Perron's method (Theorem 3.1 in \cite{Ishii}), we obtain a viscosity solution $\phi:(0,T)\times\Omega\to\R$ of the first equation in \eqref{HJ}  such that
$$\rho\le \phi\le \tilde{\rho} \mbox{ in $(0,T)\times\Omega$}.$$
Since the level-sets of $\rho(t,\cdot),\tilde{\rho}(t,\cdot)$ are equal to $\interface(t)$,  the level-sets of $\phi(t,\cdot)$ must be equal to $\interface(t)$ as well.
It is clear that $\phi$ can be continuously extended up to $([0,T)\times\p\Omega)\cup(\{0\}\times\bar{\Omega})$ satisfying
\begin{eqnarray}\label{boundary}
\rho(t,x)=\phi(t,x)=\tilde{\rho}(t,x)=\phi^0(X(0,t,x))\mbox{ on $([0,T)\times\p\Omega)\cup(\{0\}\times\bar{\Omega})$};
\end{eqnarray}
the same holds for  $\phi^\ast,\phi_\ast$, the  lower/upper semicontinuous envelop of $\phi$:
$$\phi^\ast(t,x)=\phi_\ast(t,x)=\phi^0(X(0,t,x))\mbox{ on $([0,T)\times\p\Omega)\cup(\{0\}\times\bar{\Omega})$}.$$
By definition, we have $\phi_\ast\le \phi^\ast$. On the other hand, $\phi$ being both a viscosity subsolution and supersolution implies that $\phi^\ast$ is an upper semicontinuous viscosity subsolution (note that $(\phi^{\ast})^{\ast}=\phi^\ast$) and $\phi_\ast$ is a lower semicontinuous viscosity supersolution (note that  $(\phi_{\ast})_\ast=\phi_\ast$); the comparison principle (Theorem 8.2 in \cite{CIL}) implies that $\phi^\ast\le \phi_\ast$ on $[0,T)\times\Omega$.
 Here, we remark that \eqref{HJ} in the form of  \eqref{HJ2} does not directly satisfy the monotonicity property ($u\mapsto G(z,u,q)$ must be nondecreasing),  but \eqref{bbb} implies that one can verify the monotonicity property through the change of variable $u=e^{V_0t}\tilde{u}$ (see the following subsection and Chapter 2 of  \cite{Giga}).
Thus, we  conclude that $\phi$ is continuous on $[0,T)\times\bar{\Omega}$ satisfying the initial/boundary condition strictly. Furthermore,  such a viscosity solution is unique. In fact, if $\tilde{\phi}$ is a viscosity solution of \eqref{HJ} in the current sense, the comparison principle implies $\phi\le\tilde{\phi}$ by regarding $\phi$ as a viscosity subsolution and $\tilde{\phi}$ as a viscosity supersolution; $\phi\ge\tilde{\phi}$ by regarding $\phi$ as a viscosity supersolution and $\tilde{\phi}$ as a viscosity subsolution.
\end{proof}
We remark that the result and reasoning of this subsection hold also for the Hamilton-Jacobi equation corresponding to  \eqref{problem2-2}, where we need to add suitable cut-off to the nonlinearity in  \eqref{problem2-2}. To see this, observe that the mapping $u\mapsto u(\beta-|p|)$ is not monotone for all $p\in\R^3$; hence, we define
\begin{align}\label{3simple}
&R(t,x,p):=\eta_1(t,x)\eta_3(|p|)(\beta-|p|),\\\nonumber
&\eta_3(r):=\left\{
\begin{array}{cll}\medskip
1&&\mbox{ for $\dis  0\le r\le 2(\beta+\alpha) $},
\\\nonumber\medskip
0&&\mbox{ for $\dis r\ge 3(\beta+\alpha)$}, \\\medskip
\mbox{monotone smooth transition from $1$ to $0$}&&\mbox{ otherwise},
\end{array}
\right.\\\nonumber
&\mbox{$\beta>\alpha>0$ are the constants given in Subsection 2.3}
\end{align}
to confirm $\sup|R|\le \tilde{V}_0:=2\beta+3\alpha$. Then, with this $R$, Theorem \ref{Thm2} still holds.
\subsection{$C^2$-regularity of viscosity solution near the level-set}
We prove that  the viscosity solution of \eqref{HJ}  coincides with the classical solution of \eqref{problem1} in a $t$-global tubular neighborhood of  the level-set. Note that $|\nabla\phi|$ is nicely controlled on the level-set in \eqref{problem1} and there exists a tubular neighborhood $\Theta$ in which   $\eta_1(t,x)\eta_2(|\nabla\phi(t,x)|)\equiv 1$, i.e., the solution of \eqref{problem1} satisfies  the Hamilton-Jacobi equation in \eqref{HJ} within $\Theta$.

Our proof is based on  the technique known as {\it doubling the number of variables}, which is standard in proofs of  comparison principles for viscosity solutions; more precisely, we adapt the localized version of this technique to our situation with an unusual choice of a penalty function.
The outcome of standard  {\it ``localized  doubling the number of variables''} (see, e.g.,  Theorem 3.12 of \cite{B-CD}) for a Hamilton-Jacobi equation
\begin{align}\label{3HJ-0}
\p_tu+ H(t,x,\nabla u)=0
\end{align}
states the following:
\begin{itemize}
\item[]{\it Let  $u,\tilde{u}$ be viscosity solutions of \eqref{3HJ-0} defined in a cone
$$\mathcal{C}:= \{ (t,x)\in [0,b]\times\R^N\,|\,     |x-z|\le C(b-t)    \},$$
where $C>0$ is a constant such that
\begin{align*}
&|H(t,x,p)-H(t,x,q)|\le C|p-q| ,\\
&|H(t,x,p)-H(s,y,p)|\le C(1+|p|)|(t,x)-(s,y)|.
\end{align*}
If $u(0,\cdot)=\tilde{u}(0,\cdot)$ on $\{x\,|\,|x-z|\le Cb\}$ (the bottom of $\mathcal{C}$), then $u\equiv \tilde{u}$ on $\mathcal{C}$.}
\end{itemize}
\noindent The cone is the region of dependence for general first order Hamilton-Jacobi equations, i.e., the value  $u(b,z)$ is determined by the information only on the bottom of $\mathcal{C}$ (the speed of propagation is finite).
If we directly apply the result to our case, we have to take such a cone contained in the tubular neighborhood $\Theta$, which implies that the time interval $[0,b]$ must be small.
This is the nontrivial aspect of this subsection.
In order to overcome the difficulty, we will introduce an unusual penalty function (i.e., $h(\bar{u}^2)$ below) in {\it localized doubling the number of variables} that consists of the classical solution itself.
Therefore, our technique is specialized for local comparison of a viscosity solution and a classical solution. We emphasize that if both solutions are defined on the whole domain, the issue is obvious, but otherwise not.

Suppose that $v$ satisfies (H1)--(H3).
We consider \eqref{HJ} with initial data $\phi^0\in C^2(\bar{\Omega};\R)$ 
such that $\frac{2}{3}<|\nabla \phi^0|<\frac{4}{3}$ on $\interface(0)$ (this ensures $\eta_1\eta_2\equiv1$ near the level-set).
Then, Theorem \ref{Thm1} yields a tubular neighborhood $\Theta$ of the level-set $\{\interface(t)\}_{t\ge0}$ and a unique $C^2$-solution $\phi:\Theta\to\R$ of \eqref{problem1} with $\eta_1(t,x)\eta_2(|\nabla\phi(t,x)|)\equiv 1$ in $\Theta$ (hence, from here, we rewrite  \eqref{problem1} with $R$ given in \eqref{RRRR}), while Theorem \ref{Thm2}  yields a continuous viscosity solution $\tilde{\phi}:[0,\infty)\times\bar{\Omega}\to\R$ of \eqref{HJ}.
We fix an arbitrary $T>0$ and set $\Theta_T:=\Theta_{0\le t\le T}$.

We need to convert the Hamilton-Jacobi equation in \eqref{HJ} into
\begin{align}\label{3HJ}
\p_tu+ G(t,x,\nabla u, u)=0
\end{align}
with $G$ such that $u\mapsto G(t,x,p,u)$ is nondecreasing.
This is done by the change of the functions as
\begin{align}\label{3convert}
w(t,x):=e^{-V_0t}\phi(t,x),\quad \tilde{w}(t,x):=e^{-V_0t}\tilde{\phi}(t,x)
\end{align}
with the constant $V_0$ given in \eqref{bbb}.
Since $\phi$ is a $C^2$-solution of the original Hamilton-Jacobi equation, it is clear that $w$ satisfies the new Hamilton-Jacobi equation
\begin{align}\label{33HJ}
\p_t u(t,x)+ v(t,x)\cdot\nabla u(t,x)+u(t,x)\Big(V_0-R(t,x,e^{V_0t}\nabla u(t,x))\Big)=0.
\end{align}
Note that $w$ satisfies \eqref{33HJ} also in the sense of viscosity solutions.
In the case of $\tilde{\phi}$ being a viscosity solution, one can also show that $\tilde{w}:=e^{-V_0t}\tilde{\phi}$ is a viscosity solution of \eqref{33HJ}. For the readers' convenience, we briefly explain how to do it: suppose that a test function $\psi$ is such that $\tilde{w}-\psi$ has a local maximum at $(t_0,x_0)$; then, setting the constant $r:=\tilde{w}(t_0,x_0)-\psi(t_0,x_0)$, we have
\begin{align*}
&\tilde{w}(t,x)-\psi(t,x)\le \tilde{w}(t_0,x_0)-\psi(t_0,x_0)=r\mbox{\quad near $(t_0,x_0)$},\\
&\tilde{w}(t,x)-(\psi(t,x)+r)\le \tilde{w}(t_0,x_0)-(\psi(t_0,x_0)+r)=0\mbox{\quad near $(t_0,x_0)$},
\end{align*}
 from which we obtain
 \begin{align*}
&e^{V_0t}\tilde{w}(t,x)-e^{V_0t}(\psi(t,x)+r)\le 0=\tilde{w}(t_0,x_0)-(\psi(t_0,x_0)+r)\mbox{\quad near $(t_0,x_0)$},\\
&e^{V_0t}\tilde{w}(t,x)-e^{V_0t}(\psi(t,x)+r)\le 0=e^{V_0t_0}\tilde{w}(t_0,x_0)-e^{V_0t_0}(\psi(t_0,x_0)+r)\mbox{\quad near $(t_0,x_0)$},\\
&\tilde{\phi}(t,x)-e^{V_0t}(\psi(t,x)+r)\le \tilde{\phi}(t_0,x_0)-e^{V_0t_0}(\psi(t_0,x_0)+r)\mbox{\quad near $(t_0,x_0)$};
\end{align*}
since $\tilde{\phi}$ is a viscosity subsolution, it holds that
\begin{align*}
&\p_t\{e^{V_0t}(\psi(t,x)+r)\}+v(t,x)\cdot\nabla\{e^{V_0t}(\psi(t,x)+r)\} \\
&\qquad - \tilde{\phi}(t,x)R\Big(t,x,\nabla\{e^{V_0t}(\psi(t,x)+r)\}\Big)|_{t=t_0,x=x_0}\\
& =e^{V_0t_0}V_0(\psi(t_0,x_0)+r) + e^{V_0t_0}\p_t\psi(t_0,x_0)+e^{V_0t_0}v(t_0,x_0)\cdot \nabla\psi(t_0,x_0) \\
&\qquad - \tilde{\phi}(t_0,x_0)R\Big(t_0,x_0,e^{V_0t_0}\nabla\psi(t_0,x_0)\}\Big)    \le 0;
\end{align*}
hence, noting that  $\psi(t_0,x_0)+r=\tilde{w}(t_0,x_0)$ and $\tilde{\phi}(t_0,x_0)=e^{V_0t_0}\tilde{w}(t_0,x_0)$, we obtain
\begin{align*}
\p_t \psi(t_0,x_0)+v(t_0,x_0)\cdot\nabla \psi(t_0,x_0)+\tilde{w}(t_0,x_0)\Big(V_0 -R(t_0,x_0,e^{V_0t_0}\nabla\psi(t_0,x_0)) \Big)\le 0;
\end{align*}
therefore, we conclude that $\tilde{w}$ is a viscosity subsolution of \eqref{33HJ}; similar argument shows that $\tilde{w}$ is a viscosity supersolution of \eqref{33HJ}.

We rewrite \eqref{33HJ}
in the form of \eqref{3HJ} with
$$G(t,x,p,u):=v(t,x)\cdot p+ u\tilde{R}(t,x,p),\quad \tilde{R}(t,x,p):=  (V_0-R(t,x,e^{V_0t}p)).$$
Due to the condition of $v$ and the definition of $R$ (see \eqref{RRRR}), there exists a constant $C>0$ such that
\begin{align}\label{3con}
\begin{array}{lll}
&|\tilde{R}(t,x,p)-\tilde{R}(t,x,q)|\le C|p-q| ,\quad \forall\,(t,x)\in\Theta_T,\,\,\, \forall\,p,q\in \R^3,\\
&|\tilde{R}(t,x,p)-\tilde{R}(s,y,p)|\le C(1+|p|)|(t,x)-(s,y)|,\,\,\, \forall\,(t,x)\in\Theta_T,\,\,\, \forall\,p\in \R^3.
\end{array}
\end{align}
For a technical reason, we consider another tubular neighborhood  $\tilde{\Theta}\subset\Theta$  of $\{\interface(t)\}_{t\ge0}$ such that $\phi$ is defined on the closure of  $\tilde{\Theta}$, and
(partly) describe $\tilde{\Theta}_T:=\tilde{\Theta}|_{0\le t \le T}$  as a family of cylinders, i.e., a foliation:
\begin{itemize}
\item Let  $\alpha>0$ be a constant such that
\begin{align}\label{3alpha}
\alpha-2V_0-C\sup_{\tilde{\Theta}_T}|\nabla w|>0\mbox{\,\,\, with }   w(t,x)=e^{-V_0t}\phi(t,x);
\end{align}
\item Consider the function
$$\bar{u}(t,x):= e^{\alpha t}w(t,x) ,$$
where $\bar{u}$ solves in the classical sense
\begin{align}\label{3pen}
\p_t u+v\cdot\nabla u+u\tilde{R}(t,x,e^{-\alpha t}\nabla u)= \alpha u ,\quad u(0,\cdot)=\phi^0;
\end{align}
\item With a constant $m_0>0$, define
\begin{align*}
&A_m:=\{ (t,x)\in[0,T]\times\Omega \,|\,\bar{u}(t,x)=m\},\quad-m_0\le m\le m_0,\quad  \\
&\Gamma_T:= \bigcup_{-m_0\le m\le m_0} A_m.
\end{align*}
Since $\nabla \bar{u}\neq0$ near $\cup_{0\le t\le T} (\{t\}\times\interface(t))$, we may choose  $m_0>0$ (possibly very small) so that  $\Gamma_T$ is contained in $\tilde{\Theta}_T$ and $\p\Gamma_T\setminus \Gamma_T|_{t=0,T}=(A_{m_0}\cup A_{-m_0})|_{0<t<T}$, while $\Gamma_T$ contains the $[0,T]$-part of  a tubular neighborhood of  $\{\interface(t)\}_{0\le t\le T}$. Note that such an $m_0$ depends, in general, on $T$.
\end{itemize}
\begin{Thm}\label{Theorem3}
It holds that $\phi\equiv \tilde{\phi}$ on $\Gamma_T$, where $T>0$ is arbitrary.
\end{Thm}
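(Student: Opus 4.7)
The plan is to establish both $\tilde w \le w$ and $w \le \tilde w$ on $\Gamma_T$ (hence $\tilde\phi = \phi$) by a \emph{localized doubling of variables} in which the usual cone of dependence is replaced by the foliation $\{A_m\}_{|m|\le m_0}$, and maximizers are forced into $\mathrm{int}\,\Gamma_T$ by a penalty built from the classical solution itself. Throughout the argument $w$ is a classical solution (hence both a viscosity sub- and supersolution) of \eqref{33HJ}, $\tilde w$ is the viscosity solution of \eqref{33HJ}, and $\bar u = e^{\alpha t}w$ solves \eqref{3pen} classically on $\tilde\Theta_T$ and vanishes precisely on the level set. I would fix a smooth, strictly increasing, convex $h:[0,m_0^2)\to[0,\infty)$ with $h(0)=0$ and $h(r)\to+\infty$ as $r\nearrow m_0^2$ --- for instance $h(r)=-\log(1-r/m_0^2)$.

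To show $\tilde w \le w$, I would argue by contradiction: assume $\sigma := \sup_{\Gamma_T}(\tilde w - w)>0$ and, for small $\epsilon,\mu>0$, study
\[
\Phi_{\epsilon,\mu}(t,x,s,y) := \tilde w(t,x) - w(s,y) - \frac{|(t,x)-(s,y)|^2}{\epsilon^2} - h(\bar u(t,x)^2) - h(\bar u(s,y)^2) - \frac{\mu}{T-t} - \frac{\mu}{T-s}
\]
on $\Gamma_T \times \Gamma_T$. For $\mu$ and then $\epsilon$ small enough, $\sup \Phi_{\epsilon,\mu} > 0$: the blow-up of $h(\bar u^2)$ excludes maximizers on the lateral face $A_{\pm m_0}$, $\mu/(T-t)$ excludes the top $t=T$, and the coincidence $\tilde w(0,\cdot) = w(0,\cdot) = \phi^0$ together with a standard doubling estimate excludes the bottom $t=0$. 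A maximizer $(\hat t, \hat x, \hat s, \hat y)$ will therefore lie strictly inside $\Gamma_T \times \Gamma_T$, and standard estimates yield $(\hat t, \hat x), (\hat s, \hat y) \to (\bar t, \bar x) \in \mathrm{int}\,\Gamma_T$ with $\bar u(\bar t, \bar x) \ne 0$ and $|(\hat t, \hat x) - (\hat s, \hat y)|^2/\epsilon^2 \to 0$ as $\epsilon \to 0$. I would then apply the viscosity subsolution inequality for $\tilde w$ at $(\hat t, \hat x)$ with test function $\psi(t,x) = |(t,x)-(\hat s, \hat y)|^2/\epsilon^2 + h(\bar u(t,x)^2) + \mu/(T-t)$, and the supersolution inequality for $w$ at $(\hat s, \hat y)$ with the symmetric test function; the derivatives of $h(\bar u^2)$ inject $2h'(\bar u^2)\bar u (\p_t \bar u + v\cdot\nabla\bar u)$ into each inequality, and using \eqref{3pen} these turn into the ``good'' terms $2h'(\bar u^2)\bar u^2(\alpha - \tilde R(\cdot,\cdot,e^{-\alpha\cdot}\nabla\bar u))$.

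Subtracting the two inequalities, decomposing the nonlinear piece as $\tilde w_1 \tilde R_1 - w_2 \tilde R_2 = (\tilde w_1 - w_2)\tilde R_1 + w_2(\tilde R_1 - \tilde R_2)$, using monotonicity $\tilde R \ge 0$ to keep $(\tilde w_1 - w_2)\tilde R_1 \ge 0$, the Lipschitz estimates \eqref{3con} together with $|p_\epsilon|\,|(\hat t,\hat x)-(\hat s,\hat y)|\to 0$ to bound the remaining cross terms by $o(1) + 2C\, h'\bar u^2 \sup_{\tilde\Theta_T}|\nabla w|$ per copy, and the bound $|\tilde R|\le 2V_0$, one arrives schematically at
\[
o(1) + \frac{\mu}{(T-\hat t)^2} + \frac{\mu}{(T-\hat s)^2} + 2\bigl(h'(\bar u_1^2)\bar u_1^2 + h'(\bar u_2^2)\bar u_2^2\bigr)\bigl(\alpha - 2V_0 - C\sup_{\tilde\Theta_T}|\nabla w|\bigr) \le 0.
\]
The choice \eqref{3alpha} makes the last factor strictly positive, while $h'(\bar u_i^2)\bar u_i^2 \to h'(\bar u(\bar t, \bar x)^2)\bar u(\bar t, \bar x)^2 > 0$, so for $\epsilon,\mu$ small the left-hand side is bounded below by a strictly positive constant --- a contradiction. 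The reverse inequality $w \le \tilde w$ follows by the symmetric argument (using that $w$ is a viscosity sub- as well as supersolution), whence $w = \tilde w$ and $\phi = \tilde\phi$ on $\Gamma_T$. The hard part will be the bookkeeping of the derivatives generated by the penalty $h(\bar u^2)$: since $\bar u$ itself satisfies the nonlinear equation \eqref{3pen}, its derivatives re-enter the viscosity inequality and produce terms of order $h'\bar u^2(\tilde R + C\sup|\nabla w|)$ that naively resist cancellation; what rescues the argument is the $\alpha u$-absorption in \eqref{3pen}, with $\alpha$ chosen via \eqref{3alpha} exactly so as to dominate both the uniform bound $2V_0$ on $\tilde R$ and the Lipschitz cross term $C\sup|\nabla w|$.
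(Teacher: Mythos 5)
Your plan follows the paper's strategy very closely: localized doubling of variables on the foliation $\Gamma_T$ with a penalty $h(\bar u^2)$ built from the classical solution $\bar u = e^{\alpha t}w$, which solves the auxiliary equation \eqref{3pen}; the terms $2h'(\bar u^2)\bar u(\p_t\bar u + v\cdot\nabla\bar u)$ that the penalty injects into the viscosity inequalities are converted via \eqref{3pen} to $2h'(\bar u^2)\bar u^2(\alpha - \tilde R)$, and the choice of $\alpha$ in \eqref{3alpha} makes these terms have the right sign. Your use of $\mu/(T-t)+\mu/(T-s)$ to exclude the terminal time (the paper instead invokes the one-sided time-derivative lemma in Evans's book and allows $t_0 = T$), and your choice to prove $\tilde w\le w$ and $w\le\tilde w$ separately (the paper assumes $\max(\phi-\tilde\phi)>0$ w.l.o.g.) are cosmetic variations of the same argument.

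There is, however, a genuine gap in your proposal around the unjustified claim ``for $\mu$ and then $\epsilon$ small enough, $\sup\Phi_{\epsilon,\mu}>0$.'' The paper's penalty $h$ is not a blow-up function; it is a bounded $C^1$ cut-off that is chosen to \emph{vanish identically} on $(-\infty,(m^*)^2+\delta]$ (where $m^*=\bar u(t^*,x^*)$ with $(t^*,x^*)$ a point of positive discrepancy, and $(m^*)^2+2\delta<m_0^2$), and to equal $3M$ above $(m^*)^2+2\delta$. The vanishing of $h$ at $(m^*)^2$ is essential: it guarantees $F_{\ep,\lambda}(t^*,x^*,t^*,x^*)=\sigma-2\lambda t^*\ge\sigma/2>0$, which is what (a) excludes the initial face $t_0=0$ via the coincidence $w(0,\cdot)=\tilde w(0,\cdot)$ and the smallness of $|x_0-y_0|+|t_0-s_0|$, and (b) delivers $w(t_0,x_0)>\tilde w(s_0,y_0)$, the inequality you need in order to exploit the monotonicity of $G$ (equivalently, $\tilde R\ge 0$) when subtracting the two viscosity inequalities. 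Your choice $h(r)=-\log(1-r/m_0^2)$ has $h((m^*)^2)>0$, and since $m^*$ is determined by the (unknown) maximizer of $\tilde\phi-\phi$ it can lie arbitrarily close to $\pm m_0$, so $h((m^*)^2)$ is uncontrolled and can easily exceed $\sigma/2$; then the reference point $(t^*,x^*,t^*,x^*)$ gives no useful lower bound on $\sup\Phi_{\epsilon,\mu}$ and the chain of conclusions breaks. The repair is exactly the paper's device: make $h$ vanish on a neighborhood of $(m^*)^2$ (a bounded cut-off jumping to a constant $\ge 3M$ strictly below $m_0^2$ suffices and also makes the maximizer's existence trivial), or at least rescale your blow-up penalty so that $h((m^*)^2)<\sigma/4$. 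Note also that the final contradiction does not actually need $h'(\bar u^2)\bar u^2>0$ at the limit point: since $\alpha-2V_0-C\sup|\nabla w|>0$, those terms have a favorable sign and can be dropped, and the strict positivity comes from your $\mu/(T-\hat t)^2\ge\mu/T^2$ (the paper gets it from the $\lambda(t+s)$ term instead).
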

\medskip

\noindent{\bf Remark.} {\it $\Gamma_T$ would become narrower  as $T$ gets larger. In order to find a more optimal region on which $\phi\equiv\tilde{\phi}$, one can iterate Theorem \ref{Theorem3} for $T=T_0$, $T=2T_0$, $T=3T_0$, $\ldots$, with a small $T_0>0$.}

\medskip
\begin{proof}[{\it Proof of Theorem \ref{Theorem3}.}]
We proceed by contradiction. Suppose that the assertion does not hold. We assume
$$\mbox{$\dis\max_{\Gamma_T}(\phi-\tilde{\phi})>0$ (in the other case, we switch $\phi$ and $\tilde{\phi}$).}$$
From now on, we deal with $w,\tilde{w}$ given as \eqref{3convert}.
Then, we find an interior point  $(t^\ast,x^\ast)$ of $\Gamma_T$ such that
$$\sigma:=w(t^\ast,x^\ast)-\tilde{w}(t^\ast,x^\ast)>0.$$
Since the level-set of $w$ and that of $\tilde{w}$ are identical, we see that $(t^\ast,x^\ast)\not\in A_0=\cup_{0\le t\le T}(\{t\}\times\interface(t))$. Let $m^\ast\in(-m_0,m_0) $ be  such that
$$\bar{u}(t^\ast,x^\ast)=m^\ast,\,\,\,\mbox{or equivalently, }(t^\ast,x^\ast)\in A_{m^\ast}.$$
Let $\delta>0$ be a constant such that
$$
(m^\ast)^2+2\delta<(m_0)^2.$$
 Take a constant $M>0$ such that
$$M\ge\max_{(t,x,s,y)\in \Gamma_T\times\Gamma_T} |w(t,x)-\tilde{w}(s,y)|$$
and a monotone increasing $C^1$-function $h: \R\to [0,3M]$ such that
\begin{align*}
h(r)
=\left\{
\begin{array}{cll}
 &3M, &\mbox{ if  $(m^\ast)^2 + 2\delta \le r$},\\
 &0,&\mbox{ if $ r\le (m^\ast)^2+\delta$},\\
 &\mbox{monotone transition between $0$ and $3M$},& \mbox{ otherwise}.
\end{array}
\right.
\end{align*}
For each $\ep>0$, $\lambda>0$ (they are independent of each other; $\lambda>0$ will be appropriately fixed at some point, while $\ep$ will be sent to $0$ at the end), define the function $F_{\ep,\lambda}:\Gamma_T\times\Gamma_T\to\R$ as
\begin{align*}
F_{\ep,\lambda}(t,x,s,y):=&w(t,x)-\tilde{w}(s,y)-\lambda(t+s)-\frac{1}{\ep^2}\Big(|x-y|^2+|t-s|^2\Big)\\
&- h(\bar{u}(t,x)^2)-h(\bar{u}(s,y)^2).
\end{align*}
 Let $(t_0,x_0,s_0,y_0)=(t_0(\ep,\lambda),x_0(\ep,\lambda),s_0(\ep,\lambda),y_0(\ep,\lambda))\in \Gamma_T\times \Gamma_T$ be such that
\begin{align*}
F_{\ep,\lambda}(t_0,x_0,s_0,y_0)=\max_{ \Gamma_T\times \Gamma_T}F_{\ep,\lambda}.
\end{align*}
\indent We first claim that there exists a  sufficiently small $\lambda>0$ for which we have
 \begin{align}\label{322-1}
& |x_0-y_0|=O(\ep),\,\, |t_0-s_0|=O(\ep)\mbox{ \quad as $\ep\to0+$},\\\label{322-12}
& \mbox{$t_0=0$ or $s_0=0$ or $(t_0,x_0,s_0,y_0)\in\,$[interior of $\Gamma_T\times \Gamma_T$],\,\,\,\, for all $0<\ep\ll1$}.
\end{align}
In fact, observe that
\begin{align*}
F_{\ep,\lambda}(t_0,x_0,s_0,y_0)\ge F_{\ep,\lambda}(t^\ast,x^\ast,t^\ast,x^\ast),
\end{align*}
that is,
\begin{align*}
&w(t_0,x_0)-\tilde{w}(s_0,y_0)-\lambda(t_0+s_0)
-\frac{1}{\ep^2}\Big(|x_0-y_0|^2+|t_0-s_0|^2\Big)\\
&\qquad- h(\bar{u}(t_0,x_0)^2)-h(\bar{u}(s_0,y_0)^2)
 \ge w(t^\ast,x^\ast)-\tilde{w}(t^\ast,x^\ast)-2\lambda t^\ast-2h(\bar{u}(t^\ast,x^\ast)^2).
\end{align*}
Hence, with the definition of $h$, we may fix $\lambda>0$ small enough  to obtain for all sufficiently small $\ep>0$,
\begin{align*}
&\frac{1}{\ep^2}\Big(|x_0-y_0|^2+|t_0-s_0|^2\Big)
+ h(\bar{u}(t_0,x_0)^2)+h(\bar{u}(s_0,y_0)^2)\\
&\quad \le w(t_0,x_0)-\tilde{w}(s_0,y_0)-(w(t^\ast,x^\ast)-\tilde{w}(t^\ast,x^\ast))
+2\lambda t^\ast+2h((m^\ast)^2)\\
&\quad =w(t_0,x_0)-\tilde{w}(s_0,y_0)-(w(t^\ast,x^\ast)-\tilde{w}(t^\ast,x^\ast))
+2\lambda t^\ast < 3M,
\end{align*}
and
\begin{align}\label{831}
F_{\ep,\lambda}(t_0,x_0,s_0,y_0)
&\ge w(t^\ast,x^\ast)-\tilde{w}(t^\ast,x^\ast)-2\lambda t^\ast-2h(\bar{u}(t^\ast,x^\ast)^2)\\\nonumber
&=\sigma  -2\lambda t^\ast
\ge \frac{\sigma}{2}>0.
\end{align}
Since $h$ is nonnegative,  \eqref{322-1} is clear.
Suppose that $(t_0,x_0)$ is on the ``lateral surface'' of $\Gamma_T$, i.e., $(t_0,x_0)\in A_{\pm m_0}$, or equivalently, $|\bar{u}(t_0,x_0)|=m_0$.
Then, with the definition of $h$, we see that
\begin{align*}
F_{\ep,\lambda}(t_0,x_0,s_0,y_0)
&=w(t_0,x_0)-\tilde{w}(s_0,y_0)-\lambda(t_0+s_0)
-\frac{1}{\ep^2}\Big(|x_0-y_0|^2+|t_0-s_0|^2\Big)\\
&\quad - h(m_0^2)-h(\bar{u}(s_0,y_0)^2)
\le M-h(m_0^2)=-2M <0,
\end{align*}
which contradicts to \eqref{831}; the same to $(s_0,y_0)$.  Thus, \eqref{322-12} is confirmed.

 Next, we sharpen the estimates \eqref{322-1} to be of small order. Since $F_{\ep,\lambda}(t_0,x_0,s_0,y_0)\ge F_{\ep,\lambda}(t_0,x_0,t_0,x_0)$, we have
\begin{align*}
&w(t_0,x_0)-\tilde{w}(s_0,y_0)-\lambda(t_0+s_0)
-\frac{1}{\ep^2}\Big(|x_0-y_0|^2+|t_0-s_0|^2\Big)\\
&\quad - h(\bar{u}(t_0,x_0)^2)-h(\bar{u}(s_0,y_0))
 \ge w(t_0,x_0)-\tilde{w}(t_0,x_0)-2\lambda t_0
 - 2h(\bar{u}(t_0,x_0)^2).
\end{align*}
Hence, by  \eqref{322-1} and the continuity of $\tilde{w}$ and $h$, we obtain
\begin{align*}
&\frac{1}{\ep^2}\Big(|x_0-y_0|^2+|t_0-s_0|^2\Big)
 \le \tilde{w}(t_0,x_0)-\tilde{w}(s_0,y_0)+\lambda (t_0-s_0) \\
 &\qquad +h(\bar{u}(t_0,x_0)^2)-h(\bar{u}(s_0,y_0)^2)
\to0\quad \mbox{as $\ep\to0+$},
\end{align*}
which implies
\begin{align}\label{323-1}
  |x_0-y_0|=o(\ep),\,\,\, |t_0-s_0|=o(\ep)\mbox{ \quad as $\ep\to0+$}.
\end{align}
\indent We claim that the case $t_0=0$ or $s_0=0$ is impossible in \eqref{322-12} for all sufficiently small $\ep>0$.  In fact, suppose that $t_0=0$. By \eqref{831}, \eqref{323-1} and $w(0,\cdot)\equiv\tilde{w}(0,\cdot)$ on $\Gamma_T|_{t=0}$, we see that
\begin{align*}
\frac{\sigma}{2}&\le w(t_0,x_0)-\tilde{w}(s_0,y_0)= \tilde{w}(t_0,x_0)-\tilde{w}(s_0,y_0)\le \tilde{\omega}(|t_0-s_0|+|x_0-y_0|)=\tilde{\omega}(o(\ep)),
\end{align*}
where $\tilde{\omega}(\cdot)$ stands for the modulus of continuity of $\tilde{w}$.
If $\ep>0$ is sufficiently small, we face a contradiction.
The same to $s_0$.

We confirmed that the point $(t_0,x_0)$ is an interior point of $\Gamma_T$, or $t_0=T$ and $x_0$ is an  interior point of $\Gamma_T|_{t=T}$; $( s_0,y_0)$ is an interior point of $\Gamma_T$, or $s_0=T$  and $y_0$ is an  interior point of $\Gamma_T|_{t=T}$,  which enables us to test $w$ and $\tilde{w}$ at $(t_0,x_0)$, $( s_0,y_0)$ in terms of viscosity sub-/supersolutions (see Lemma in Section 10.2 of \cite{Evans-book} for a remark on the case $t_0=T$ or $s_0=T$).
Observe that the mapping $(t,x)\mapsto F_{\ep,\lambda}(t,x,s_0,y_0)$
takes the maximum at the point $(t_0,x_0)$; then,
introducing the $C^1$-function $\psi$ as
\begin{align*}
\psi(t,x)&:= \tilde{w}(s_0,y_0)+\lambda(t+s_0)
+\frac{1}{\ep^2}\Big(|x-y_0|^2+|t-s_0|^2\Big)
+h(\bar{u}(t,x)^2)
+h(\bar{u}(s_0,y_0)^2),
\end{align*}
we see that the definition of $F_{\ep,\lambda}$ implies that
$$w-\psi\mbox{ takes a maximum at $(t_0,x_0)$}.$$
Since $w$ satisfies the equation in the sense of viscosity subsolutions, we have
$$\p_t \psi(t_0,x_0)+G(t_0,x_0,\nabla_x \psi(t_0,x_0),w(t_0,x_0))\le 0.$$
Therefore, we obtain
\begin{align}\label{3sub}
&\lambda +\frac{2}{\ep^2}(t_0-s_0)
+2h'(\bar{u}(t_0,x_0)^2)\bar{u}(t_0,x_0)\p_t \bar{u}(t_0,x_0)\\\nonumber
& +G\Big(t_0,x_0, \frac{2}{\ep^2}(x_0-y_0)
+2h'(\bar{u}(t_0,x_0)^2)\bar{u}(t_0,x_0)\nabla \bar{u}(t_0,x_0),w(t_0,x_0)  \Big)\le 0.
\end{align}
Similarly,  the mapping $(s,y)\mapsto -F_{\ep,\lambda}(t_0,x_0,s,y)$
takes a minimum at the point $(s_0,y_0)$; then,
introducing the $C^1$-function $\tilde{\psi}$ as
\begin{align*}
\tilde{\psi}(s,y)&:= w(t_0,x_0)-\lambda(t_0+s)
-\frac{1}{\ep^2}\Big(|x_0-y|^2+|t_0-s|^2\Big)
 -h(\bar{u}(t_0,x_0)^2)-h(\bar{u}(s,y)^2),
\end{align*}
we see that
$$\tilde{w}-\tilde{\psi}\mbox{ takes a minimum at $(s_0,y_0)$}.$$
Since $\tilde{w}$ satisfies the equation in the sense of viscosity supersolutions, we have
$$\p_s \tilde{\psi}(s_0,y_0)+G(s_0,y_0,\nabla_y \tilde{\psi}(s_0,y_0),\tilde{w}(s_0,y_0))\ge 0.$$
Consequently, we obtain
\begin{align}\label{3super}
&-\lambda +\frac{2}{\ep^2}(t_0-s_0)
-2h'(\bar{u}(s_0,y_0)^2)\bar{u}(s_0,y_0)\p_tu(s_0,y_0)\\\nonumber
& +G\Big(s_0,y_0, \frac{2}{\ep^2}(x_0-y_0)
-2h'(\bar{u}(s_0,y_0)^2)\bar{u}(s_0,y_0)\nabla \bar{u}(s_0,y_0),\tilde{w}(s_0,y_0)  \Big)\ge 0.
\end{align}
It follows from \eqref{831} that
\begin{align*}
w(t_0,x_0)-\tilde{w}(s_0,x_0)> 0.
\end{align*}
Since $G(t,x,p,u)$ is nondecreasing with respect to $u$, \eqref{3super} yields
\begin{align}\label{3super2}
&-\lambda +\frac{2}{\ep^2}(t_0-s_0)
-2h'(\bar{u}(s_0,y_0)^2)\bar{u}(s_0,y_0)\p_tu(s_0,y_0)\\\nonumber
& +G\Big(s_0,y_0, \frac{2}{\ep^2}(x_0-y_0)
-h'(\bar{u}(s_0,y_0)^2)\bar{u}(s_0,y_0)\nabla u(s_0,y_0),w(t_0,x_0)  \Big)\ge 0.
\end{align}
By \eqref{3sub} and \eqref{3super2}, we obtain
\begin{align*}
2\lambda
&\le -2h'(\bar{u}(s_0,y_0)^2)\bar{u}(s_0,y_0)\Big(\p_t\bar{u}(s_0,y_0)
+v(s_0,y_0)\cdot \nabla \bar{u}(s_0,y_0)\Big)\\
&\quad -2h'(\bar{u}(t_0,x_0)^2)\bar{u}(t_0,x_0)\Big(\p_t\bar{u}(t_0,x_0)
+v(t_0,x_0)\cdot \nabla \bar{u}(t_0,x_0)\Big)\\
&\quad + 2(v(s_0,y_0)-v(t_0,x_0))\cdot\frac{x_0-y_0}{\ep^2}\\
&\quad + w(t_0,x_0)\tilde{R}\Big(s_0,y_0, \frac{2}{\ep^2}(x_0-y_0)
-2h'(\bar{u}(s_0,y_0)^2)\bar{u}(s_0,y_0)\nabla \bar{u}(s_0,y_0)
\Big)\\
&\quad -w(t_0,x_0)\tilde{R}\Big(t_0,x_0, \frac{2}{\ep^2}(x_0-y_0)+2h'(\bar{u}(t_0,x_0)^2)\bar{u}(t_0,x_0)\nabla \bar{u}(t_0,x_0)
 \Big).
\end{align*}
Since $\bar{u}$ solves \eqref{3pen} and $|\tilde{R}|\le 2V_0$, we obtain with \eqref{3con}, $h'\ge0$ and \eqref{323-1},
\begin{align*}
&2\lambda
\le -2h'(\bar{u}(s_0,y_0)^2)\bar{u}(s_0,y_0) \cdot \bar{u}(s_0,y_0)\Big(\alpha-\tilde{R}(s_0,y_0,e^{-\alpha s_0}\nabla \bar{u}(s_0,y_0))\Big)\\
&\quad  -2h'(\bar{u}(t_0,x_0)^2)\bar{u}(t_0,x_0) \cdot \bar{u}(t_0,x_0)\Big(\alpha-\tilde{R}(t_0,x_0,e^{-\alpha t_0}\nabla \bar{u}(t_0,x_0))\Big)\\
&\quad + 2\mbox{[Lipschitz constant of $v$ within $\Theta_T$]} |(t_0,x_0)-(s_0,y_0)| \frac{|x_0-y_0|}{\ep^2}\\
&\quad +2C|w(t_0,x_0)|| h'(\bar{u}(s_0,y_0)^2)\bar{u}(s_0,y_0)\nabla \bar{u}(s_0,y_0)+h'(\bar{u}(t_0,x_0)^2)\bar{u}(t_0,x_0)\nabla \bar{u}(t_0,x_0)|
\\
&\quad + C|w(t_0,x_0)|\Big(1+ \Big|  \frac{2}{\ep^2}(x_0-y_0)
+2h'(\bar{u}(t_0,x_0)^2)\bar{u}(t_0,x_0)\nabla \bar{u}(t_0,x_0)  \Big| \Big)|(t_0,x_0)-(s_0,x_0)|\\
&\le   -2e^{2\alpha t_0}h'(\bar{u}(s_0,y_0)^2)w(t_0,x_0) ^2\Big(\alpha-2V_0\Big)+o(\ep) \\
&\quad -2e^{2\alpha t_0}h'(\bar{u}(t_0,x_0)^2)w(t_0,x_0)^2\Big(\alpha-2V_0\Big)\\
&\quad +2e^{2\alpha t_0}C h'(\bar{u}(s_0,y_0)^2)w(t_0,x_0)^2|\nabla w(s_0,y_0)|+o(\ep)\\
&\quad+2e^{2\alpha t_0}Ch'(\bar{u}(t_0,x_0)^2)w(t_0,x_0)^2|\nabla w(t_0,x_0)|+\frac{o(\ep)^2}{\ep^2}\\
&= -2e^{2\alpha t_0}h'(\bar{u}(s_0,y_0)^2)w(t_0,x_0)^2\Big( \alpha-2V_0 - C |\nabla w(s_0,y_0)|   \Big)\\
&\quad -2e^{2\alpha t_0}h'(\bar{u}(t_0,x_0)^2)w(t_0,x_0)^2\Big( \alpha-2V_0 - C |\nabla w(t_0,x_0)|   \Big)+\frac{o(\ep)^2}{\ep^2}.
\end{align*}
The right-hand side of this inequality is bounded from above by $\frac{o(\ep)^2}{\ep^2}$ due to the choice of $\alpha$ in \eqref{3alpha}. Thus, we reach a contradiction by sending $\ep\to0+$.
\end{proof}

We remark that the result and reasoning of this subsection hold also for the problems \eqref{problem2-2} and \eqref{HJ} with $R$ given in \eqref{3simple} and $C^2$-initial data $\phi^0$ such that $\beta-\alpha\le |\nabla \phi^0|\le \beta+\alpha$.


\medskip\medskip\medskip

\noindent{\bf Acknowledgement.}
The authors thank Prof. Qing Liu in Okinawa Institute of Science and Technology for his pointing out Theorem 3.12 of \cite{B-CD}.
Dieter Bothe and Mathis Fricke are supported by the Deutsche Forschungsgemeinschaft (DFG, German
Research Foundation) -- Project-ID 265191195 -- SFB 1194.
This work was done mostly during Kohei Soga's one-year research stay at the department of mathematics, Technische Universit\"at Darmstadt, Germany, with the grant Fukuzawa Fund (Keio Gijuku Fukuzawa Memorial Fund for the Advancement of Education and Research).  Kohei Soga is also supported by JSPS Grant-in-aid for Young Scientists \#18K13443 and JSPS Grants-in-Aid for Scientific Research (C) \#22K03391.
\medskip\medskip\medskip

\noindent{\bf Data availability.}
Data sharing not applicable to this article as no datasets were generated or analysed during the current study.

\medskip

\noindent{\bf Conflicts of interest statement.} The authors state that there is no conflict of interest.
\medskip

\renewcommand{\theequation}{A.\arabic{equation} }
\appendix
\def\thesection{Appendix 1: method of characteristics}
\section{}
\setcounter{equation}{0}

We explain the method of characteristics for Hamilton-Jacobi equations.
Consider a function $H=H(t,x,p,\Phi)$ such that
\begin{align*}
&H:[0,\infty)\times\R^N\times\R^N\times\R\to\R \mbox{ is $C^1$ in $(t,x,p,\Phi)$};\\
&\mbox{$H$ is twice partial differentiable in $(x,p,\Phi)$;}\\  
&\mbox{ the partial derivatives of $H$ are continuous  in $(t,x,p,\Phi)$}.
\end{align*}
Note that the upcoming argument is available also for a function $H$ defined on  $[0,\infty)\times K$ with an open subset $K\subset \R^{2N+1}$. Let $w:\R^n\to\R$ be a given $C^2$-function. We will construct a unique $C^2$-solution $u$  of
\begin{eqnarray}\label{gHJ}
\frac{\p u}{\p t}+H(t,x,\nabla u,u)=0 \mbox{ in $O$},\quad u(0,x)=w(x)\mbox{ on $O|_{t=0}$},
\end{eqnarray}
where $O\subset\R\times\R^N$ is a neighborhood of a point $(0,\xi)\in\R\times\R^N$. We remark that $\xi$ can be arbitrary, but the size of $O$ depends on $\xi$; there exists $\ep>0$ such that $[-\ep,\ep]\times B_\ep (\xi)\subset O$ (here, $B_\ep (\xi)\subset\R^N$ is the $\ep$-ball with the center $\xi$) and \eqref{gHJ} is solvable for negative time; $O$ is determined by the inverse map theorem around the point $(0,\xi)$; if one wants to solve \eqref{gHJ} in $O$ larger than an open set coming from the inverse map theorem around a single point, one needs more arguments, in addition to the upcoming discussion, to confirm injectivity.

We consider autonomization by introducing
$$\tilde{H}(t,x,r,p,\Phi):=r+H(t,x,p,\Phi)$$
 with the extended configuration variable $(t,x)$. Then, we see that \eqref{gHJ} can be seen as
$$\tilde{H}(t,x,\mbox{$\frac{\p u}{\p t}$},\nabla u,u)=0.$$
The characteristic ODE-system of \eqref{gHJ} is given as  the autonomous (contact) Hamiltonian system generated by $\tilde{H}(t,x,r,p,\Phi)$, i.e., setting $Q:=(t,x)$, $P=(r,p)$, $\tilde{H}=\tilde{H}(Q,P,\Phi)$,
\begin{eqnarray*}
&&Q'(s)=\frac{\p \tilde{H}}{\p P}(Q(s),P(s),\Phi(s)), \\
&&P'(s)=-\frac{\p \tilde{H}}{\p Q}(Q(s),P(s),\Phi(s))-\frac{\p \tilde{H}}{\p \Phi}(Q(s),P(s),\Phi(s))P(s),\\
&&\Phi'(s)=\frac{\p \tilde{H}}{\p P}(Q(s),P(s),\Phi(s))\cdot P(s)-\tilde{H}(Q(s),P(s),\Phi(s)),\\
&&Q(0)=(0,\xi),\,\,\,\,P(0)=(-H(0,\xi,\nabla w(\xi),w(\xi)), \nabla w(\xi)),\,\,\,\,\Phi(0)=w(\xi),\quad \xi\in\R^N,
\end{eqnarray*}
where the solution is written as $Q(s;\xi),P(s;\xi),\Phi(s;\xi)$.
This system is equivalent to
\begin{eqnarray}\label{CH1}
&&x'(s)=\frac{\p H}{\p p}(t(s),x(s),p(s),\Phi(s)),\\\nonumber &&x(0)=\xi,\\\label{CH2}
&&p'(s)=-\frac{\p H}{\p x}(t(s),x(s),p(s),\Phi(s))-\frac{\p H}{\p \Phi}(t(s),x(s),p(s),\Phi(s))p(s),\\\nonumber
&& p(0)=\nabla w(\xi),\\\label{CH3}
&&\Phi'(s)=\frac{\p H}{\p p}(t(s),x(s),p(s),\Phi(s))\cdot p(s)-H(t(s),x(s),p(s),\Phi(s)) ,\\\nonumber
&&\Phi(0)=w(\xi),\\\label{CH4}
&&t'(s)=1,\quad\\\nonumber
&& t(0)=0,\\\label{CH5}
&&r'(s)=-\frac{\p H}{\p t} (t(s),x(s),p(s),\Phi(s))-\frac{\p H}{\p \Phi}(t(s),x(s),p(s),\Phi(s))r(s),\\\nonumber
&&r(0)=-H(0,\xi,\nabla w(\xi), w(\xi)).
\end{eqnarray}
Observe that
\begin{eqnarray}\label{A10}
t(s;\xi)&\equiv& s,\quad \forall\,\xi,\,\, \forall\,s,\\\label{A11}
\tilde{H}(Q(0;\xi),P(0;\xi),\Phi(0;\xi))&=&0,\quad \forall\,\xi,
\\\label{A12}
\frac{\p}{\p s}\{\tilde{H}(Q(s;\xi),P(s;\xi),\Phi(s;\xi))\}
&=&\frac{\p \tilde{H}}{\p Q}(Q(s;\xi),P(s;\xi),\Phi(s;\xi))\cdot Q'(s;\xi)\\\nonumber
&&\!\!\!\!\!\!\!\!\!\!\!\!\!\!\!\!\!\!\!\!\!\!\!\!\!\!\!\!\!\!\!\!\!\!\!\!\!\!\!\!\!\!\!\!\!\!\!\!\!\!\!\!\!\!\!\!\!\!\!\!\!\!\!\!\!\!\!\!\!\!\!\!\!\!\!\!\!\!\!\!\!\!\!\!\!\!\!\!\!\!\!\!\!\!\!\!+\frac{\p \tilde{H}}{\p P}(Q(s;\xi),P(s;\xi),\Phi(s;\xi))\cdot P'(s;\xi)+\frac{\p \tilde{H}}{\p \Phi}(Q(s;\xi),P(s;\xi),\Phi(s;\xi))\Phi'(s)
\\\nonumber
&&\!\!\!\!\!\!\!\!\!\!\!\!\!\!\!\!\!\!\!\!\!\!\!\!\!\!\!\!\!\!\!\!\!\!\!\!\!\!\!\!\!\!\!\!\!\!\!\!\!\!\!\!\!\!\!\!\!\!\!\!\!\!\!\!\!\!\!\!\!\!\!\!\!\!\!\!\!\!\!\!\!\!\!\!\!\!\!\!\!\!\!\!\!\!\!\!=-\tilde{H}(Q(s;\xi),P(s;\xi),\Phi(s;\xi))\frac{\p \tilde{H}}{\p \Phi}(Q(s;\xi),P(s;\xi),\Phi(s;\xi)).
\end{eqnarray}
\eqref{A10} implies that $t(s)=s$ serves as the non-autonomous factor in  \eqref{CH1}, \eqref{CH2},\eqref{CH3},\eqref{CH5};
hence, our regularity assumption on $H$ is sufficient to provide the smooth dependency with respect to initial  data (note that this is apparently not clear if we only  look at the autonomous system with $\tilde{H}(Q,P,\Phi)$, as $C^2$-regularity in $Q$ is missing).
\eqref{A11} and \eqref{A12} imply  that
\begin{eqnarray}\label{A13}
&\tilde{H}(Q(s;\xi),P(s;\xi),\Phi(s;\xi))\equiv 0 \mbox{ as long as $Q(s;\xi),P(s;\xi),\Phi(s;\xi)$ exist}.
\end{eqnarray}
The map $F(s,\xi):=Q(s;\xi)=(s,x(s;\xi))$ is $C^1$-smooth  as long as the characteristic ODEs have solutions. Furthermore, it holds that
\begin{eqnarray}\label{inverse}
F(0,\xi)=(0,\xi),\quad \det D_{(s,\xi)}F(0,\xi)=1,
\end{eqnarray}
where $D_{(s,\xi)}F$ is the Jacobian matrix of $F$.
Therefore, for each $\xi\in\R^N$, the inverse map theorem guarantees that there exist two sets, a neighborhood $O_1$ of $(0,\xi)$ and a neighborhood $O_2$ of $F(0,\xi)=(0,\xi)$, such that $F:O_1\to O_2$ is a $C^1$-diffeomorphism.  We obtain the inverse map of $F$ as
$$F^{-1}:O_2\to O_1,\quad F^{-1}(t,x)=(t,\varphi(t,x)),$$
where $\xi=\varphi(t,x)$ is the point for which $x(s;\xi)$ passes through $x$ at $s=t$.
The essential point is to obtain open sets $O_1$ and $O_2=F(O_1)$ such that $F:O_1\to O_2$ is a $C^1$-diffeomorphism,  no matter how we find them.
The easiest way is to use  the inverse map theorem around a single point $(0,\xi)$ based on \eqref{inverse}.
The invertibility of $F$, or the invertibility of $x(s;\xi)$ for fixed $s$, on a wider region requires injectivity, which will be an additional issue to be verified.

We proceed, assuming that $O_1\subset\R\times\R^N$ is a neighborhood of a subset of $\{0\}\times\R^N$ and $F:O_1\to O_2=F(O_1)$ is a $C^1$-diffeomorphism.
By  \eqref{A13}, we have
$$\frac{\p \Phi}{\p s}(s;\xi)=\frac{\p \tilde{H}}{\p P}(Q(s;\xi),P(s;\xi),\Phi(s;\xi))\cdot P(s;\xi)=P(s;\xi)\cdot \frac{\p Q}{\p s}(s;\xi).$$
It holds that
$$\frac{\p \Phi}{\p \xi_i}(s;\xi)=P(s;\xi)\cdot \frac{\p Q}{\p \xi_i}(s;\xi),\quad i=1,2,\ldots N.$$
In fact, we have
\begin{eqnarray*}
&&\frac{\p \Phi}{\p \xi_i}(0;\xi)-P(0;\xi)\cdot \frac{\p Q}{\p \xi_i}(0;\xi)=\frac{\p w}{\p \xi_i}(\xi)-\nabla w(\xi) \cdot \frac{\p \xi}{\p \xi_i}=0,\\
&&\frac{\p}{\p s}\Big( \frac{\p \Phi}{\p \xi_i}(s;\xi)-P(s;\xi)\cdot \frac{\p Q}{\p \xi_i}(s;\xi) \Big)
= \frac{\p}{\p s}\frac{\p \Phi}{\p \xi_i}(s;\xi)- \frac{\p}{\p s}\Big(P(s;\xi)\cdot \frac{\p Q}{\p \xi_i}(s;\xi)\Big)\\
&&\quad =\frac{\p}{\p \xi_i}\frac{\p \Phi}{\p s}(s;\xi)- \frac{\p}{\p s}\Big(P(s;\xi)\cdot \frac{\p Q}{\p \xi_i}(s;\xi)\Big)\\
&&\quad =\frac{\p}{\p \xi_i}\Big(P(s;\xi)\cdot \frac{\p Q}{\p s}(s;\xi)\Big)- \frac{\p}{\p s}\Big(P(s;\xi)\cdot \frac{\p Q}{\p \xi_i}(s;\xi)\Big)\\
&&\quad =\frac{\p Q}{\p s}(s;\xi)\cdot  \frac{\p P}{\p \xi_i}(s;\xi)-  \frac{\p P}{\p s}(s;\xi)\frac{\p Q}{\p \xi_i}(s;\xi)\\
&&\quad=\frac{\p\tilde{H}}{\p P}(Q(s;\xi),P(s;\xi),\Phi(s;\xi)) \cdot  \frac{\p P}{\p \xi_i}(s;\xi)+\frac{\p\tilde{H}}{\p Q}(Q(s;\xi),P(s;\xi),\Phi(s;\xi)) \cdot \frac{\p Q}{\p \xi_i}(s;\xi)\\
&&\qquad +\frac{\p\tilde{H}}{\p \Phi}(Q(s;\xi),P(s;\xi),\Phi(s;\xi))P(s;\xi)\cdot   \frac{\p Q}{\p \xi_i}(s;\xi)\\
&&\quad = \frac{\p }{\p \xi_i}\tilde{H}(Q(s;\xi),P(s;\xi),\Phi(s;\xi)) \\
&&\qquad -\frac{\p\tilde{H}}{\p \Phi}(Q(s;\xi),P(s;\xi),\Phi(s;\xi)) \Big( \frac{\p \Phi}{\p \xi_i}(s;\xi)-P(s;\xi)\cdot \frac{\p Q}{\p \xi_i}(s;\xi) \Big)\\
&&\quad = -\frac{\p\tilde{H}}{\p \Phi}(Q(s;\xi),P(s;\xi),\Phi(s;\xi)) \Big( \frac{\p \Phi}{\p \xi_i}(s;\xi)-P(s;\xi)\cdot \frac{\p Q}{\p \xi_i}(s;\xi) \Big),\\
&& \frac{\p \Phi}{\p \xi_i}(s;\xi)-P(s;\xi)\cdot \frac{\p Q}{\p \xi_i}(s;\xi)\equiv 0,\quad \forall\,\xi,\,\, \forall\,s.
\end{eqnarray*}
Hence, we see that
$$ D_{(s;\xi)}\Phi(s;\xi)= P(s;\xi) D_{(s;\xi)}Q(s;\xi)= P(s;\xi) D_{(s;\xi)} F(s,\xi).$$
Now we define
$$u(t,x):=\Phi(F^{-1}(t,x))=\Phi(t;\varphi(t,x)):O_2\to \R,$$
which is apparently $C^1$-smooth. Observe that
\begin{eqnarray*}
(\p_t u(t,x),\nabla u(t,x))&=&D_{(t,x)}u(t,x)=D_{(s;\xi)}\Phi(F^{-1}(t,x))D_{(t,x)}F^{-1}(t,x)\\
&=&P(F^{-1}(t,x)) D_{(s;\xi)} F(F^{-1}(t,x))D_{(t,x)}F^{-1}(t,x)\\&=&P(F^{-1}(t,x))
=P(t;\varphi(t,x)).
\end{eqnarray*}
Therefore, with \eqref{A10} and \eqref{A13}, we see that $u$ is in fact $C^2$-smooth and satisfies
\begin{eqnarray*}
0&=&\tilde{H}(Q(s;\xi), P(s;\xi);\Phi(s;\xi))|_{(s,\xi)=F^{-1}(t,x)=(t,\varphi(t,x))}\\
&=&\tilde{H}(t,x,\p_t u(t,x),\nabla u(t,x),u(t,x))\\
&=&\p_t u(t,x)+H(t,x,\nabla u(t,x),u(t,x)),\,\,\,\forall\,(t,x)\in O_2,\\
u(0,x)&=&\Phi(0;x)=w(x),\,\,\,\forall\,x\in O_2|_{t=0}.
\end{eqnarray*}
The uniqueness of \eqref{gHJ} follows from the uniqueness of the characteristic ODEs.

As a conclusion, {\it all one needs to do to solve  \eqref{gHJ} by the method of characteristics are
\begin{itemize}
\item Analyze \eqref{CH1}, \eqref{CH2} and \eqref{CH3} with $t(s)=s$, where  \eqref{CH4} and \eqref{CH5} are not explicitly necessary;
\item For the map $F(s,\xi):=(s,x(s,\xi))$, find an open set $O_1\subset\R\times\R^N$ with $O_1|_{t=0}\neq \emptyset$ such that $F:O_1\to O_2=F(O_1)$  is a $C^1$-diffeomorphism;
\item Define $u(t,x):=\Phi(t;\varphi(t,x)):O_2\to\R$ with $F^{-1}(t,x)=(t,\varphi(t,x))$.
\end{itemize}
}
\appendix
\def\thesection{Appendix 2: ODEs on closed sets and flow invariance}
\section{}

Let $J \subset \R$ be an open interval, $V \subset \R^N$ open and $g: J \times V \to \R^N$ continuous.
By Peano's theorem, the initial value problem (IVP for short)
\begin{equation}\label{eq:1}
x'(t)= g\big(t, x(t)\big) \text{ on } J, \quad  x(t_0)=x_0
\end{equation}
has a local (classical) solution for every $(t_0, x_0) \in J \times V$, i.e.\ there is $\ve=\ve(t_0, x_0) >0$ and a $C^1$-function
$x: I_\ve \to \R^N$, with $I_\ve=(t_0 -\ve, t_0 + \ve)$, such that \eqref{eq:1} is satisfied in every point.

In this situation, a closed set $K \subset V$ is said to be positive (negative) flow invariant for \eqref{eq:1}, if every solution of \eqref{eq:1} that starts in $t=t_0$ at a point $x_0 \in K$ stays inside $K$ for all (admissible) $t > t_0$ ($t < t_0$). In this case, one also speaks of forward (backward) invariance of $K$ for the right-hand side $g$. A closed set $K$ is called flow invariant, or just invariant, for \eqref{eq:1}, if $K$ is both positive and negative flow invariant for \eqref{eq:1}.

Since classical solutions for \eqref{eq:1} with (only) continuous right-hand side need not be unique, it might happen that, for given closed $K \subset V$, one solution starting in $x_0$ stays in $K$, while another solution leaves $K$.
The autonomous standard example for non-uniqueness, namely $g(x)=2\sqrt{|x|}$ on $V=\R$, already shows this behavior with $K:=\{0\}$ and $x_0=0$. One therefore calls a closed $K \subset V$ weakly (positively or negatively) flow invariant for \eqref{eq:1}, if $t_0 \in J$ and $x_0 \in K$ implies the existence of one  solution staying in $K$ (for $t>t_0$ or $t<t_0$, respectively).

We are interested in situations of unique solvability of \eqref{eq:1}, which holds true if $g$ is jointly continuous and locally Lipschitz continuous in $x$, i.e.\ for every $(t_0, x_0) \in J \times V$ there is $\delta= \delta(t_0, x_0) >0$ and $L=L(t_0, x_0)>0$ such that
\begin{equation}\label{eq:2}
|g(t,x)-g(t,\bar{x})| \leq L |x-\bar{x}| \text{ for all } t \in I_\delta \cap J, \; x,\bar{x} \in B_\delta (x_0)\cap V.
\end{equation}
In this case, the Picard-Lindel\"of theorem yields unique solvability of \eqref{eq:1}. Therefore, weak flow invariance then is the same as flow invariance.
Let us note in passing that forward (backward) unique solvability of \eqref{eq:1} for continuous $g$ holds under weaker additional assumptions such as one-sided Lipschitz continuity in $x$. More precisely, if for every $(t_0, x_0) \in J \times V$ there is $\delta=\delta(t_0, x_0) >0$ and a $k=k_{t_0,x_0} \in L^1(J)$ such that
\begin{equation}\label{eq:3}
\left\langle g(t,x) -g(t,\bar{x}), x-\bar{x}\right\rangle \leq k(t) |x-\bar{x}|^2 \text{ for } t \in J, \; x, \bar{x} \in B_\delta(x_0) \cap V,
\end{equation}
then forward uniqueness holds for \eqref{eq:1}.

Evidently, flow invariance of $K$ requires conditions on $g$. The extreme case is $K=\{x_0\}$, where $g(t, x_0)=0$ for $t \in I_\ve$ is required. In the general case, if $x_0 \in K$ and $x(t) \in K$ on $[t_0, t_0 +\ve)$ holds, then
\begin{equation*}\label{eq:4}
x(t_0 +h)= x_0 +h g(t_0, x_0) + e (h; t_0, x_0) \in K \text{ for } 0 \leq h < \ve
\end{equation*}
with a remainder term $e(\cdot; t_0, x_0)$, which satisfies
\begin{equation*}\label{eq:5}
\lim\limits_{h \to 0+} \frac{1}{h} e (h; t_0, x_0)=0.
\end{equation*}
Hence
\begin{equation*}\label{eq:6}
{\rm dist}\,\big(x_0+h g(t_0, x_0), K\big) \leq \left|e (h; t_0, x_0)\right|= o(h) \text{ as } h \to 0+.
\end{equation*}
Consequently, the condition
\begin{equation*}\label{eq:7}
g(t_0, x_0) \in \widetilde{T}_K(x_0)
\end{equation*}
with the so-called ``tangent cone''
\begin{equation*}\label{eq:8}
\widetilde{T}_K(x)=\left\{z \in X: \lim\limits_{h \to 0+} h^{-1} {\rm dist}\,(x+h z, K)=0\right\} \text{ for } x \in K
\end{equation*}
is a necessary condition, where $X=\R^N$.
Actually, it turns out that the apparently weaker condition
\begin{equation}\label{eq:9}
g(t,x) \in T_K(x) \quad\text{ for } t \in J, \; x \in K
\end{equation}
with the so-called Bouligand contingent cone
\begin{equation*}
T_K(x)=\left\{z \in X: \liminf\limits_{h \to 0+} h^{-1} {\rm dist}\, (x+ hz,K)=0\right\} \text{ for } x \in K
\end{equation*}
is sufficient for positive flow invariance of the closed set $K \subset V$; as above, $X=\R^N$ here.
This is a direct consequence of the following result on existence of solutions for ordinary differential equations (ODEs) on closed sets.
\medskip

\noindent{\bf Theorem A1.}
{\it Let $J=(a,b) \subset \R$, $K\subset \R^N$ closed and $g:J \times K \to \R^N$ continuous.
Then, given any $(t_0, x_0) \in J \times K$, the IVP \eqref{eq:1} has a local (classical) forward solution if and only if $g$ satisfies the subtangential condition \eqref{eq:9}. }
\medskip

Proofs of Theorem A1 can be found in \cite{AmannODEs}, \cite{DeLN} or \cite{Jan-Wilke-ODEs}.
We call elements from $T_K(x)$ subtangential vectors (to $K$ at $x$). Observe that in Theorem A1 the right-hand side $g(t,x)$ is only defined for $x \in K$, hence the constraint $x(t) \in K$ is incorporated into the domain of definition of $g$.
If $g$ is defined on the larger domain $J \times V$ with $V \supset K$, then Theorem A1 yields weak positive flow invariance of $K$ as only continuity of $g$ is assumed.
A solution which stays inside $K$ is then also called viable \cite{AuViab}.
Positive flow invariance follows if forward uniqueness for \eqref{eq:1} holds. Since backward solutions of \eqref{eq:1} are equivalent to forward solutions for $\widetilde{g} (t,x):= -g(2t_0-t,x)$, we also obtain a characterisation of negative flow invariance. Together, this is contained in\medskip

\noindent{\bf Theorem A2.}
{\it Let $J=(a,b) \subset \R$, $K \subset \R^N$ closed and $g: J \times K \to \R^N$ continuous.
Then the following statements hold true:
\begin{enumerate}
	\item[(a)]
	The IVP  \eqref{eq:1} admits local solutions on some $I_\ve$ (i.e.\ forward \& backward) iff
		\[
		g(t,x) \in T_K(x) \text{ and } -g(t,x) \in T_K(x) \text{ on } J \times K.\vspace{-0.1in}
	\]
	\item[(b)]
	If $g$ also satisfies the one-sided Lipschitz condition \eqref{eq:3} and satisfies \eqref{eq:9}, then IVP \eqref{eq:1} has a unique local forward solution for every $t_0 \in J$ and $x_0 \in K$.
	\item[(c)]
	If $g$ is locally Lipschitz continuous in $x$ (in the sense of \eqref{eq:2} above), with $\pm g(t,x) \in T_K(x)$ for all $t \in J$, $x \in K$, then IVP \eqref{eq:1} has a unique local solution on some $I_\ve$ (i.e.,\ forward \& backward) for every $t_0 \in J$ and $x_0 \in K$.
	
	If $g$ is defined on $J \times V$ with some open $V \supset K$, then $K$ is positive flow invariant for \eqref{eq:1} in the situation of (b), while $K$ is flow invariant for \eqref{eq:1} in the situation as described in (c).
\end{enumerate}
}

Theorems A1 and A2 have been generalized in several directions. See \cite{DeLN} for extensions to ODEs in Banach space, \cite{MDE} for extensions to differential inclusions (in $\R^N$ as well as in real Banach spaces), \cite{Bo2} for time-dependent constraints, i.e.\ IVP \eqref{eq:1} with the additional condition that $x(t) \in K(t)$ is to hold and \cite{Bo9}, \cite{Bo_Cara} for extensions to more general evolution problems.
Due to Theorem A1 and A2, information on flow invariance of given sets can be -- to a large extent -- obtained from properties of the set of subtangential vectors and their calculation.


\end{document}